\documentclass[11pt, a4paper, leqno]{amsart}

\usepackage[T1]{fontenc}
\usepackage[%
  pdfauthor={Denis Bonheure, Silvia Cingolani, and Jean Van Schaftingen},%
  pdftitle={The logarithmic Choquard equation: sharp asymptotics and nondegeneracy of the groundstate}]%
  {hyperref}
\usepackage[%
  backrefs,
  abbrev]%
  {amsrefs}

\usepackage{lmodern}
\usepackage{microtype}
\usepackage{enumerate}

\usepackage{todonotes}

\newtheorem{theorem}{Theorem}
\newtheorem{proposition}{Proposition}[section]

\newtheorem{lemma}[proposition]{Lemma}
\newtheorem{corollary}[proposition]{Corollary}
\theoremstyle{definition}
\newtheorem{remark}{Remark}[section]
\numberwithin{equation}{section}

\newcommand{\N}{{\mathbb N}}
\newcommand{\Z}{{\mathbb Z}}
\newcommand{\R}{{\mathbb R}}
\newcommand{\C}{{\mathbb C}}

\newcommand{\abs}[1]{\lvert #1 \rvert}

\newcommand{\Bigabs}[1]{\Bigl\lvert #1 \Bigr\rvert}
\newcommand{\norm}[1]{\lVert #1 \rVert}
\newcommand{\scalprod}[2]{(#1\vert #2)}

\newcommand{\st}{\;:\;}
\newcommand{\cN}{{\mathcal N}}

\renewcommand{\epsilon}{\varepsilon}

 % changed

\newcommand{\dif}{\,\mathrm{d}}

\title[The logarithmic Choquard equation]{The logarithmic Choquard equation : sharp asymptotics and nondegeneracy of the groundstate}

\author{Denis Bonheure}
\address{
Denis Bonheure \\  
D{\'e}partement de Math{\'e}matique, Universit{\'e} Libre de Bruxelles\\ 
CP 214, Boulevard du Triomphe\\
B-1050 Bruxelles, Belgium\\
and INRIA -- Team MEPHYSTO.}
\email{denis.bonheure@ulb.ac.be}

\author{Silvia Cingolani}
\address{Silvia Cingolani \\ Politecnico di Bari\\ 
	Dipartimento di Meccanica, Matema\-tica e Management\\ 
	Via E.\ Orabona 4 
	\\ 70125 Bari \\ 
	Italy}
\email{silvia.cingolani@poliba.it}

\author{Jean Van Schaftingen}
\address{Jean Van Schaftingen \\ Universit\'e catholique de Louvain\\ 
Institut de Re\-cher\-che en Math\'ematique et Phy\-sique\\ 
Chemin du Cyclotron 2 
bte L7.01.01\\ 1348 Louvain-la-Neuve \\ 
Belgium}
\email{Jean.VanSchaftingen@UCLouvain.be}

\begin{document}

\subjclass[2010]{35J50, 35Q40}

\keywords{Logarithmic Choquard equation; planar Schr\"odinger--Newton system;  positive ground state solution; asymptotics; nondegeneracy}

\begin{abstract}
We derive the asymptotic decay of the unique positive, radially symmetric  solution  to the logarithmic Choquard equation
$$
- \Delta u + a u = \frac{1}{2 \pi}  \Bigl[\ln \frac{1}{|x|}* \abs{u}^2 \Bigr] \ u \qquad \text{in $\R^2$}
$$
and we establish its \emph{nondegeneracy}.
For the corresponding three-dimensional problem, the nondegeneracy property of the positive ground state to the Choquard equation was proved by E. Lenzmann (Analysis \& PDE, 2009).
\end{abstract}
\maketitle

\section{Introduction}
We consider the nonlocal model equation
 \begin{equation}
 \label{eqChoquardNd}
 - \Delta u  + a u  = \bigl[\Phi_N * |u|^2\bigr] \ u   
 \qquad \hbox{in} \ \mathbb{R}^{N}
\end{equation}
where $a$ is a constant and $\Phi : \mathbb{R}^N \to \mathbb{R}$ is the Newton kernel, that is the fundamental solution of the Laplace equation in $\R^N$, namely
\begin{align*}
&\Phi_N(x) = \frac{\Gamma(\frac{N - 2}{2})}{4 \pi^{N/2} |x|^{N-2}}&
&\text{if $N \geq 3$,}&
&\text{ and }&
&\Phi(x) = \frac{1}{2 \pi} \ln \frac{1}{|x|}& 
\text{if $N=2$}.
\end{align*}

In dimension $N = 3$, the integro-differential equation \eqref{eqChoquardNd} has been introduced to study the quantum physics of \emph{electrons in an ionic crystal} (Pekar's polaron model) \cite{pekar}.
It has later also been proposed as a \emph{coupling of quantum physics with Newtonian gravitation} \citelist{\cite{penrose}\cite{Jones1995newtonian}\cite{Diosi1984}}.
E.\thinspace H.\thinspace Lieb has proved the existence of a unique ground state solution of \eqref{eqChoquardNd} in dimension $N=3$, which is positive and radially symmetric \cite{Lieb1977} (see also \cites{lions,mazhao,CingolaniClappSecchi,MorozVanSchaftingen2013,WangYi}). Successively, E.
\thinspace Lenzmann has shown the nondegeneracy of the unique positive ground state solution to the three-dimensional equation \eqref{eqChoquardNd} \cite{Lenzmann2009}.

In this paper we focus on the planar integro-differential equation corresponding to \eqref{eqChoquardNd}
\begin{equation}
\label{prob2}
- \Delta u + a u = \frac{1}{2 \pi}  \Bigl[\ln \frac{1}{|\cdot|} \ast \abs{u}^2 \Bigr] \ u \qquad \text{in $\R^2$}.
\end{equation}
We refer to it as the logarithmic Choquard equation (or  planar Schr\"o\-dinger--Newton system).

This two-dimensional problem has remained for a long time a quite open field of study.
While Lieb's existence proof has a straightforward extensions to the higher dimensions $N=4$ and $N = 5$ and the existence of finite energy solutions is forbidden for \(N \ge 6\) by a Pohozhaev identity (see for example \citelist{\cite{MorozVanSchaftingen2013}*{Proposition 3.1}\cite{CingolaniSecchiSquassina2010}*{Lemma 2.1}\cite{Menzala1983}*{(2.8)}\cite{GenevVenkov2012}*{(56)}}), the situation is less clear for lower dimensions due to the \emph{lack of positivity of the Coulomb interaction energy term}. 
For \(N = 1\), this difficulty has been overcome recently and the existence of a unique ground state has been shown by solving a minimization problem \cite{choquard.stubbe}.

Back to our planar case \(N = 2\), after numerical studies suggesting the existence of bound states \cite{harrison}*{\S 6}, 
Ph.\thinspace{}Choquard, J.\thinspace{}Stubbe and M.\thinspace{}Vuffray have proved the existence of a unique positive radially symmetric solution to \eqref{prob2} by applying a shooting method to the associated system of two ordinary differential equations \cite{choquard.stubbe.vuffray}. 

In contrast with the higher-dimensional case $N \geq 3$, the applicability of variational methods is not straightforward for \(N = 2\). 
Although \eqref{prob2} has, at least formally, a variational structure related
to the energy functional
$$
u \mapsto I(u)= \frac{1}{2} \int_{\R^2} \bigl(|\nabla u|^2
+ a u^2\bigr) + \frac{1}{8 \pi} \int_{\R^2}\int_{\R^2}
\ln(|x-y|^2)\abs{u (x)}^2 \abs{u (y)}^2\dif x \dif y  
$$
this energy functional is \emph{not well-defined on the natural Sobolev space} $H^1(\R^2)$. 

J.\thinspace{}Stubbe has tackled that problem \cite{Stubbe} by setting a
variational framework for \eqref{prob2} within the functional space 
$$
X:= \Bigl\{u \in H^1(\R^2)\st \int_{\R^2}\ln(1+|x|)\abs{u (x)}^2\dif x < \infty\Bigr\},
$$
endowed with a norm defined for each function \(u \in X\) by
\[
\norm{u}_X^2:= \int_{\R^2} \abs{\nabla u(x)}^2 + \abs{u (x)}^2 \bigl(a + \ln_+ \abs{x}\bigr)\dif x,
\]
where, for each \(s \in (0, +\infty)\)  \(\ln_+ s = (\ln s)_+\). 
This functional \(I\) is well-defined and continuously differentiable on the space \(X\). Critical points $u \in X$ of $I$ are strong solutions in $W^{2,p}(\R^2)$, for all $p \ge 1$,
and classical solutions in $C^2(\R^2)$
of \eqref{prob2}.

Even if $X$ provides a variational framework for \eqref{prob2}, 
some difficulties arise. First, the norm of $X$ \emph{is not 
invariant under translations} whereas the functional $I$ is invariant under
translations of \(\mathbb{R}^2\). Second, the quadratic part of the functional $I$ 
is never coercive on $X$, whatever the value of $a \in \mathbb{R}$.

By using strict rearrangement inequalities, J.\thinspace{}Stubbe has proved that there exists, for any $a \geq 0$, a unique ground state, which is a positive spherically symmetric decreasing function \cite{Stubbe}.
In addition, he proved that there exists a negative number
$a^* < 0$ such that for any $a \in (a^*, 0)$ there
are two ground states with different $L^2$ norm 
and that in the limiting
case $a =a^*$, there is again a unique ground state. T.\thinspace{}Weth and the second author \cite{CW} recently constructed a sequence of solution pairs $(\pm u_n)_{n \in \mathbb{N}}\subset X\) of the equation
\eqref{prob2}  such that $I(u_n) \to \infty$ as $n \to + \infty$. They also provided a variational characterization of the least energy solution. Namely, they proved that the restriction of the functional $I$ to the associated Nehari manifold
 $\cN:= \{u \in X \setminus \{0\}\::\: I'(u)u=0\}$
 attains a global minimum and that every minimizer $u \in \cN$ of $I|_{\cN}$ is a solution of \eqref{prob2} which does not change sign and obeys the variational
characterization
 $$
I(u) = \inf_{u \in X} \sup_{t \in \R} I(tu).
 $$
In addition, the following uniqueness result was proved by T.\thinspace{}Weth and the second author.

\begin{theorem}
  [\cite{CW}*{Theorem 1.3}]%
  \label{sec:introduction-3}%
  For every $a>0$, every positive solution $u \in X$ of \eqref{prob2} is radially symmetric up to translation and strictly decreasing in the distance
  from the symmetry center. 
  Moreover $u$ is unique up to translation in \(\R^2\).
\end{theorem}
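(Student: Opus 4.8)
The plan is to prove the statement in two stages: first, the radial symmetry and strict monotonicity of an \emph{arbitrary} positive solution, which I would obtain by the method of moving planes adapted to the nonlocal term; and second, the uniqueness up to translation, which once symmetry is known reduces to the uniqueness of the positive \emph{radial} solution, already available from the shooting analysis of Choquard, Stubbe and Vuffray \cite{choquard.stubbe.vuffray} (equivalently, a positive radial solution is a ground state by Stubbe's variational characterisation \cite{Stubbe}, and the ground state is unique). Before that I would collect the needed qualitative facts on a fixed positive solution $u \in X$: by the regularity recorded above, $u \in C^2(\R^2) \cap W^{2,p}(\R^2)$ for all $p$, hence $u$ and $\nabla u$ are bounded and uniformly continuous, and since $u \in L^2(\R^2)$ this forces $u(x) \to 0$ as $\abs{x} \to \infty$; moreover $u > 0$ by the strong maximum principle. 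Writing $V := \frac{1}{2\pi}\bigl[\ln\frac{1}{\abs{\cdot}} \ast \abs{u}^2\bigr]$, one has $V \in C^1(\R^2)$, $V$ is bounded above, and $V(x) = -\frac{1}{2\pi}\norm{u}_{L^2(\R^2)}^2 \ln\abs{x} + O(1)$ as $\abs{x}\to\infty$, so the Schr\"odinger potential $a - V$ is bounded below and tends to $+\infty$; a comparison argument then gives that $u$ and $\nabla u$ decay exponentially. Finally the set $K := \{x : V(x) \ge a\}$ is compact and $\min_K u > 0$.

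For the symmetry, fix a direction, say $e_1$, and for $\lambda \in \R$ put $T_\lambda := \{x_1 = \lambda\}$, $\Sigma_\lambda := \{x_1 > \lambda\}$, write $x^\lambda := (2\lambda - x_1, x_2)$ for the reflection of $x$ across $T_\lambda$, and set $w_\lambda(x) := u(x^\lambda) - u(x)$ on $\overline{\Sigma_\lambda}$. Subtracting the equations satisfied by $u(x^\lambda)$ and $u(x)$ gives, on $\Sigma_\lambda$,
\[
  -\Delta w_\lambda + \bigl(a - V(x^\lambda)\bigr)\,w_\lambda = \bigl(V(x^\lambda) - V(x)\bigr)\,u(x).
\]
The crucial point is that, splitting $\R^2$ into $\Sigma_\lambda$ and its reflection and using $\abs{x^\lambda - y} = \abs{x - y^\lambda}$ and $\abs{x^\lambda - y^\lambda} = \abs{x - y}$, one obtains the identity
\[
  V(x^\lambda) - V(x) = \frac{1}{2\pi}\int_{\Sigma_\lambda} \ln\frac{\abs{x - y^\lambda}}{\abs{x - y}}\,\bigl(\abs{u(y^\lambda)}^2 - \abs{u(y)}^2\bigr)\dif y ,
\]
in which $\ln\frac{\abs{x-y^\lambda}}{\abs{x-y}} > 0$ for all $x, y \in \Sigma_\lambda$, since $\abs{x - y^\lambda}^2 - \abs{x-y}^2 = 4(x_1 - \lambda)(y_1 - \lambda) > 0$. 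Thus, although the logarithmic kernel changes sign, its monotonicity in $\abs{x-y}$ makes the \emph{difference} of potentials sign-definite: if $w_\lambda \ge 0$ on $\Sigma_\lambda$ then $V(x^\lambda) - V(x) \ge 0$ there, so $w_\lambda$ is a supersolution of the linear equation $-\Delta w_\lambda + (a - V(x^\lambda))w_\lambda \ge 0$ on $\Sigma_\lambda$, with $w_\lambda = 0$ on $T_\lambda$ and $w_\lambda \to 0$ at infinity.

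I would then run the continuation in $\lambda$. For $\lambda$ so large that $\sup_{\abs z > \lambda} u < \min_K u$, the set $\{x \in \Sigma_\lambda : x^\lambda \in K\}$ is bounded and there $u(x^\lambda) \ge \min_K u > u(x)$, hence $w_\lambda \ge 0$ there, so $a - V(x^\lambda)$ is nonnegative on $\{w_\lambda < 0\}$; testing the equation against $w_\lambda^- := \min(w_\lambda, 0)$ and using the exponential smallness of $u$ on $\Sigma_\lambda$ then forces $w_\lambda^- \equiv 0$, i.e.\ $w_\lambda \ge 0$ on $\Sigma_\lambda$. Let $\lambda_0 := \inf\{\mu : w_\lambda \ge 0 \text{ on } \Sigma_\lambda \text{ for all } \lambda \ge \mu\}$; it is finite, since otherwise letting $\lambda \to -\infty$ in $w_\lambda \ge 0$ together with $u \to 0$ would force $u \le 0$. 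By continuity $w_{\lambda_0} \ge 0$, so the nonlocal term has the good sign at $\lambda_0$, and the strong maximum principle with the Hopf lemma gives either $w_{\lambda_0} \equiv 0$, or $w_{\lambda_0} > 0$ in $\Sigma_{\lambda_0}$ with a strict inner normal derivative on $T_{\lambda_0}$; in the latter case, a maximum principle in domains of small measure (the ``bad'' set $\{x \in \Sigma_\lambda : V(x^\lambda) > a\}$ is compact, $w_\lambda^-$ vanishes far out, and the remaining boundary strip is narrow) shows $w_\lambda \ge 0$ for $\lambda$ slightly below $\lambda_0$, contradicting minimality. Hence $w_{\lambda_0} \equiv 0$: $u$ is symmetric about $T_{\lambda_0}$, and $w_\lambda > 0$ in $\Sigma_\lambda$ for every $\lambda > \lambda_0$ (otherwise $u$ would be periodic in $x_1$, impossible for $0 \le u \in L^2(\R^2)$), so $u$ is strictly decreasing in $\abs{x_1 - \lambda_0}$. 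Since \eqref{prob2} is invariant under rotations, the same holds in every direction; the unique maximum point $x_0$ of $u$ then lies on all the resulting symmetry hyperplanes, so $u(\cdot + x_0)$ is radially symmetric and strictly decreasing in the radial variable, and combining this with the uniqueness of the positive radial solution quoted above gives uniqueness up to translation.

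I expect the genuine difficulty to lie entirely in the symmetry step, and specifically in its maximum-principle ingredients. The logarithmic kernel itself cooperates — the identity above shows that the potential difference is sign-definite precisely because $r \mapsto \ln\frac{1}{r}$ is decreasing — but the Schr\"odinger potential $a - V$ has no definite sign on bounded sets and the reflected half-spaces $\Sigma_\lambda$ are unbounded, so both the initialisation and the continuation of the moving-plane scheme must be carried out by combining the exponential decay of $u$ with maximum principles in domains of small measure, in the spirit of the Berestycki–Nirenberg technique.
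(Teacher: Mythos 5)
The paper itself contains no proof of this statement: Theorem~\ref{sec:introduction-3} is imported from \cite{CW}*{Theorem 1.3} (and its system counterpart, Theorem~\ref{sec:symm-uniq-posit-11}, from \cite{CW}*{Theorem 6.1}), so the only comparison possible is with the cited literature. Your two-stage plan --- moving planes for the symmetry of an arbitrary positive solution, then reduction of uniqueness to the uniqueness of the positive radial solution obtained by the shooting analysis of \cite{choquard.stubbe.vuffray} --- is essentially the route of those cited works, and your key structural observation is the right one: rewriting $V(x^\lambda)-V(x)$ as an integral over $\Sigma_\lambda$ against the kernel $\ln\bigl(\abs{x-y^\lambda}/\abs{x-y}\bigr)>0$ is exactly what makes the sign-changing logarithmic convolution cooperate with the reflection, in the same spirit as the Newton-shell and multipole identities used elsewhere in this paper.

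Two steps of your sketch are stated too loosely, though both are repairable by standard arguments. First, in the initialisation you only secure $a-V(x^\lambda)\ge 0$ on $\{w_\lambda<0\}$; testing with $w_\lambda^-$ then leaves only $\int\abs{\nabla w_\lambda^-}^2$ on the left, and on the unbounded half-plane this does not dominate the $L^2$ quantity on the right, so $w_\lambda^-\equiv 0$ does not follow as written. Take instead $K=\{V\ge a/2\}$ (still compact), so that $a-V(x^\lambda)\ge a/2$ on $\{w_\lambda<0\}$ for large $\lambda$, and verify, e.g.\ by a Schur-type bound using $\abs{w_\lambda^-}\le u$ on $\{w_\lambda<0\}$ together with the superexponential decay of $u$ on $\Sigma_\lambda$, that the nonlocal right-hand side is at most $\epsilon(\lambda)\norm{w_\lambda^-}_{L^2}^2$ with $\epsilon(\lambda)\to 0$. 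Second, in the continuation below $\lambda_0$ you assert that $w_\lambda^-$ ``vanishes far out''; this is not known a priori --- it is part of what must be proved. The usual repair is to split $\Sigma_\lambda$ into a large ball, where $w_{\lambda_0}$ has a positive lower bound on compact subsets and uniform continuity gives $w_\lambda\ge 0$ for $\lambda$ close to $\lambda_0$, and its complement, where $a-V(x^\lambda)$ is large because $V\to-\infty$, so the same coercivity-plus-smallness test closes the argument without any small-measure maximum principle. Finally, the parenthetical claim that every positive radial solution is a ground state in Stubbe's sense is unjustified (and unnecessary): rely on \cite{choquard.stubbe.vuffray}, as \cite{CW} does, noting that their uniqueness concerns exactly the normalisation of the potential appearing in \eqref{prob2}, i.e.\ with no additive constant --- this is precisely the gauge fixing discussed after Theorem~\ref{sec:symm-uniq-posit-11}.
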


Our first result is a description of the asymptotic behaviour of this unique positive solution of the logarithmic Choquard equation \eqref{prob2}. 

\begin{theorem}
\label{theoremAsymptotics}
If \(a > 0\) and if \(u \in X\) is a radially symmetric positive solution of \eqref{prob2}, then there exists \(\mu \in (0, +\infty)\) such that, as $|x|\to\infty$, 
\[
 u (x) = \frac{\bigl(\mu + o (1)\bigr)}
 {\sqrt{\abs{x}} (\ln \abs{x})^{1/4}}
 \exp \biggl(-\sqrt{M} e^{-a/M} \int_{1}^{e^{a/M} \abs{x}} \sqrt{\ln s} \dif s \biggr),
\]
where 
\[
 M = \frac{1}{2 \pi} \int_{\mathbb{R}^2} \abs{u}^2.
\]
\end{theorem}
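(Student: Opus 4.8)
The plan is to analyze the asymptotic behaviour of the solution through the nonlocal coefficient in the equation. The key observation is that \eqref{prob2} can be rewritten as a linear Schrödinger-type equation $-\Delta u + V(x) u = 0$ where the "effective potential" is
\[
 V(x) = a - \frac{1}{2\pi}\Bigl[\ln\frac{1}{\abs{\cdot}} * \abs{u}^2\Bigr](x).
\]
Since $u \in X$, we have $\abs{u}^2 \in L^1(\R^2)$, and away from the origin the convolution behaves like $\frac{1}{2\pi}(\ln\frac{1}{\abs{x}})\int_{\R^2}\abs{u}^2 = -M\ln\abs{x}$ plus lower-order corrections. More precisely, I would first establish that
\[
 V(x) = M \ln\abs{x} + a - M\ln e^{a/M} + o(1) = M\ln\bigl(e^{a/M}\abs{x}\bigr) + o(1)\quad\text{as }\abs{x}\to\infty,
\]
with quantitative control on the remainder; this requires knowing that $u$ itself already decays fast enough (at least faster than any negative power of $\abs{x}$ and in a suitable weighted sense), which should follow from a preliminary rough decay estimate using the fact that $V(x)\to+\infty$. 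So the first step is a bootstrap: show $V\to+\infty$, deduce exponential-type decay of $u$, feed that back into the convolution to get the sharp expansion of $V$.

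**Second**, with the sharp behaviour of $V$ in hand, I would reduce to a one-dimensional ODE analysis. By Theorem~\ref{sec:introduction-3} the solution is radial, so writing $u(x) = v(r)$ with $r = \abs{x}$ the equation becomes
\[
 v'' + \frac{1}{r} v' = V(r)\, v,\qquad V(r) = M\ln\bigl(e^{a/M} r\bigr) + o(1).
\]
This is a WKB-type problem with a slowly varying, unbounded potential. The leading exponential factor is $\exp\bigl(-\int^r \sqrt{V(s)}\,ds\bigr)$, and $\sqrt{V(s)} = \sqrt{M}\sqrt{\ln(e^{a/M}s)}(1+o(1))$; substituting $s = e^{-a/M}\tau$ converts $\int^r\sqrt{M\ln(e^{a/M}s)}\,ds$ into $\sqrt{M}e^{-a/M}\int^{e^{a/M}r}\sqrt{\ln\tau}\,d\tau$, which is exactly the exponent appearing in the statement. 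The algebraic prefactor $\abs{x}^{-1/2}(\ln\abs{x})^{-1/4}$ comes from the standard WKB amplitude $V(r)^{-1/4}$ combined with the $r^{-1/2}$ coming from the first-order term $\frac1r v'$ (equivalently, the substitution $v = r^{-1/2} w$ removing the first-derivative term, after which the amplitude is $V^{-1/4}\sim (M\ln)^{-1/4}$).

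**The rigorous implementation** I have in mind is to perform the exact substitution $u(x) = \abs{x}^{-1/2}\bigl(\ln\abs{x}\bigr)^{-1/4} \exp\bigl(-\varphi(\abs{x})\bigr) g(\abs{x})$, where $\varphi(r) = \sqrt{M}e^{-a/M}\int_1^{e^{a/M}r}\sqrt{\ln s}\,ds$, and then show that $g$ satisfies a linear second-order ODE whose coefficients are integrable at infinity, so that $g(r)$ converges to a finite positive limit $\mu$. Positivity of $\mu$ follows because $u>0$ and, by a comparison/sub-supersolution argument, $u$ cannot decay faster than the WKB rate; finiteness follows because it cannot decay slower. This step is where the error terms must be tracked carefully: I need that the difference $V(r) - M\ln(e^{a/M}r)$ is $o(1)$ with an integrable derivative, and that the corrections generated by differentiating the prefactor and the exponent twice (terms like $\varphi''/\varphi'$, $\varphi'/r$, $(\ln r)^{-1}$, etc.) are all $L^1(dr)$ near infinity — which they are, since $\varphi'(r)\sim \sqrt{M\ln r}$ grows, making $1/\varphi'$ and its relatives decay, while the logarithmic terms decay like $1/(r\ln r)$ or faster after the weighting.

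**The main obstacle** I anticipate is the first step: getting a sufficiently sharp expansion of the nonlocal coefficient $V$, with quantitative error bounds, rather than just the leading term. One must control $\int_{\R^2}\bigl(\ln\abs{x} - \ln\abs{x-y}\bigr)\abs{u(y)}^2\,dy$ uniformly for large $\abs{x}$; splitting the integral into the regions $\abs{y}\le\abs{x}/2$, $\abs{x-y}\le\abs{x}/2$, and the remainder, the first region contributes $O(1/\abs{x})$ times moments of $\abs{u}^2$, the second is controlled by the (already known, rough) decay of $u$ near the far point together with local integrability of $\ln$, and the third is $o(1)$ provided $u$ has enough decay — so the argument is genuinely circular and needs a careful bootstrap with a self-improving a priori bound. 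Once this uniform-in-$x$ control on $V$ is secured, the ODE/WKB part, while computationally involved, is routine.
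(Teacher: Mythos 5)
Your proposal is essentially the paper's own argument: first show $u(x)=o(e^{-\lambda|x|})$ for every $\lambda>0$ (since $w\to-\infty$), feed this back into the convolution (the paper does this via Newton's shell theorem, Proposition~\ref{propositionVDecay}) to get $a-w(r)=a+M\ln r$ up to an exponentially small error, then carry out a rigorous WKB analysis of the radial linear equation with amplitude $|x|^{-1/2}(\ln|x|)^{-1/4}$ — which the paper implements through the explicit sub/supersolutions of Lemma~\ref{lemmaRefinedUpperAndLowerSolutions} and the comparison/Cauchy-criterion argument of Corollary~\ref{corollaryRefinedAsymptotics} — and finally absorb $\int\bigl(\sqrt{a-w(s)}-\sqrt{a+M\ln s}\bigr)\dif s$ into $\mu$ after the change of variables $s=e^{-a/M}t$. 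One caveat: your stated tolerance ``$V-M\ln(e^{a/M}r)=o(1)$ with integrable derivative'' would not by itself pin down the exponent and the constant $\mu$ (what is needed is convergence of $\int^{\infty}\bigl(\sqrt{V(s)}-\sqrt{a+M\ln s}\bigr)\dif s$), but the exponentially small remainder that your own bootstrap yields is more than sufficient, so this is an imprecise statement of the requirement rather than a genuine gap.
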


The integral does not seem to have an explicit asymptotic equivalent at the order \(o (1)\) as \(\abs{x} \to \infty\) in terms of elementary functions; roughly speaking it behaves as 
\[
 \int_{1}^{e^{a/M} \abs{x}} \sqrt{\ln s} \dif s
 = \abs{x} \sqrt{\ln \abs{x}} (1 + o (1)),
\]
as \(\abs{x} \to \infty\). 
This integral can be reexpressed in terms of classical special functions (imaginary error function or Dawson function, see Remark~\ref{remarkSpecialFunctions} below).

We obtain this decay rate by studying the decay rate of solutions to the \emph{linear problem}
\[
  -\Delta u + V u = 0,
\]
when \(V (x) \equiv M \ln \abs{x}\) as \(\abs{x} \to \infty\).

%       and if 
%       \[
%       \lim_{\abs{x} \to \infty} \frac{V (x)}{\ln \abs{x}} = \lambda,
%       \]
%       then
%       \[
%       \lim_{\abs{x} \to \infty} \frac{\ln u (x)}{\abs{x} \sqrt{\ln \abs{x}}} = 
%       -\sqrt{\lambda}.
%       \]

% First, we derive the following general result about linear equation having logarithmic coefficient.

% 
% \begin{theorem}\label{decay}
% 	If \(u\) is a positive solution of 
% 	\[
% 	-\Delta u + V u = 0,
% 	\]
% 	and if 
% 	\[
% 	\lim_{\abs{x} \to \infty} \frac{V (x)}{\ln \abs{x}} = \lambda,
% 	\]
% 	then
% 	\[
% 	\lim_{\abs{x} \to \infty} \frac{\ln u (x)}{\abs{x} \sqrt{\ln \abs{x}}} = 
% 	-\sqrt{\lambda}.
% 	\]
% \end{theorem}
% 
% From Theorem~\ref{decay} we can infer the rate of decay of the unique positive solution $u$ of \eqref{prob2}. 

The asymptotic behaviour of $u$ is a key ingredient to derive the precise description of the kernel of  the linear operator \(\mathcal{L}(u)\) defined by 
\begin{equation}
\label{eq:L(u)}
\mathcal{L}(u)  : \tilde X\to L^2(\R^2) :  \varphi\mapsto -\Delta \varphi + (a - w) \varphi + 2 u \Bigl( \frac{\ln}{2\pi} \ast (u \varphi)\Bigr),
\end{equation}
where 
\begin{equation}
\label{eq:w}
w:\R^2\to\R :x\mapsto \frac{1}{2 \pi} \int_{\R^2} \ln \frac{1}{\abs{x - y}} \abs{u (y)}^2\dif y
\end{equation}
and
\begin{multline}
\label{eq:tildeX}
\tilde X:= \Bigl\{\varphi \in X \st \text{there exists \(f \in L^2 (\R^2)\) such that }\\
\text{for every \(\psi \in C^\infty_c (\R^2)\)} \int_{\R^2} \varphi \mathcal{L} (u)\psi = \int_{\R^2} f \psi \Bigr\}.
\end{multline}
%by setting
%for each $\varphi \in \Tilde{X}$ 
%\[
%\mathcal{L}(u) \varphi = -\Delta \varphi + (a - w) \varphi + 2 u \Bigl( \frac{\ln}{2\pi} \ast (u \varphi)\Bigr).
%\]
%where 
%.
By standard arguments, one easily shows that $\mathcal{L}(u)$ is a self adjoint operator acting on \(L^2(\R^2)\) with domain 
\(\tilde X\). Also, differentiating the equation \eqref{prob2}, it is clear that $\gamma \cdot \nabla u\in \ker \mathcal{L}(u)$ for every $\gamma \in\R^2$. Our main result is the nondegeneracy of the positive solution $u$ of Theorem \ref{sec:introduction-3}. Namely, the kernel of the operator $\mathcal{L}(u)$ is exactly the vector space spanned by the partial derivatives of $u$.

\begin{theorem}\label{theoremNondegenerate} 
If $a>0$ and $u \in X$ is a positive solution of \eqref{prob2}, then 
  \[%\begin{multline*}
  \ker \mathcal{L}(u) =
  \bigl\{ %\varphi : \R^2 \to \R \st \text{there exists \(\gamma \in \R^2\) such that}\\
  %\text{for each \(x \in \R^2\), } \varphi (x) = 
  \gamma \cdot \nabla u : \gamma\in\R^2\bigr\}.
  \]%\end{multline*}
\end{theorem}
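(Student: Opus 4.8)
The plan is to follow the strategy pioneered by Lenzmann for the three-dimensional Choquard equation, adapting it to the logarithmic kernel and the space $\tilde X$. The first step is to exploit the rotational symmetry: since $u$ is radial, the operator $\mathcal{L}(u)$ commutes with the action of $O(2)$ on $L^2(\R^2)$, so we may decompose $L^2(\R^2) = \bigoplus_{k \in \Z} \mathcal{H}_k$ into angular sectors spanned by $e^{i k \theta}$ and study the restriction $\mathcal{L}(u)|_{\mathcal{H}_k}$ separately. The two translations $\partial_1 u$ and $\partial_2 u$ live entirely in the sector $\abs{k} = 1$, where they span the kernel; so I must show (i) that $\mathcal{L}(u)$ has trivial kernel on $\mathcal{H}_0$, (ii) that $\ker \mathcal{L}(u)|_{\mathcal{H}_1}$ is exactly one-dimensional (spanned by $u'(r)$), and (iii) that $\mathcal{L}(u)$ is strictly positive on $\mathcal{H}_k$ for $\abs{k} \geq 2$. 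Part (iii) is the easiest: on $\mathcal{H}_k$ the operator gains a centrifugal term $k^2/r^2 \geq 4/r^2$, and since $u'(r)$ — restricted to the $\abs{k}=1$ sector it already generates a zero mode — a standard comparison (Perron--Frobenius / ground-state representation using that $\abs{x}^{-1}\nabla u$ type test function, or simply writing $\mathcal{L}(u)|_{\mathcal{H}_k} = \mathcal{L}(u)|_{\mathcal{H}_1} + (k^2-1)/r^2$ and noting the nonnegativity of the latter on the orthogonal complement of the lowest mode) forces strict positivity for $\abs{k} \geq 2$.

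For part (ii), the radial derivative $\phi_0 := u'(r) < 0$ is a nodeless element of $\ker \mathcal{L}(u)|_{\mathcal{H}_1}$; a Sturm-type oscillation argument (or the fact that $\mathcal{L}(u)|_{\mathcal{H}_1}$ acting on the one-dimensional radial ODE has $\phi_0$ as its principal eigenfunction at eigenvalue $0$, hence $0$ is the bottom of the spectrum in that sector) shows any other kernel element is proportional to $\phi_0$. Here I must be careful that the nonlocal term $2u\,(\tfrac{\ln}{2\pi} \ast (u\varphi))$ does not spoil the maximum-principle structure: when projected onto $\mathcal{H}_1$ the convolution with $\ln\tfrac{1}{\abs{\cdot}}$ produces a one-dimensional integral operator with a kernel that, by the known sign of the Fourier--Bessel transform of the logarithm in the $k=1$ channel, is sign-definite — this is precisely the point where the sharp asymptotics of Theorem~\ref{theoremAsymptotics} and the monotonicity of $u$ enter, guaranteeing that all integrals converge and that $\phi_0$ indeed lies in $\tilde X$.

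Part (i), the sector $\mathcal{H}_0$, is the crux and the main obstacle, exactly as in Lenzmann's work. Suppose $\varphi = \varphi(r) \in \ker \mathcal{L}(u)|_{\mathcal{H}_0}$. The key identity comes from the scaling/dilation structure: differentiating the equation $-\Delta u + au = wu$ along the family $u_\lambda(x) = u(\lambda x)$ (equivalently, pairing with $x \cdot \nabla u + 2u$ or with $u$ itself, together with the Choquard-specific Pohozhaev and Nehari relations) produces an explicit non-kernel element and a Fredholm alternative / orthogonality obstruction that pins down $\varphi$. Concretely, one shows $u \notin \operatorname{ran}\mathcal{L}(u)$ is obstructed in a controlled way, and combines this with a positivity estimate on $\mathcal{L}(u)|_{\mathcal{H}_0}$ restricted to the orthogonal complement of $u$: writing $\langle \mathcal{L}(u)\varphi, \varphi\rangle$ and using the relation between $w$ and $u$ one decomposes the quadratic form and shows it is positive definite modulo the single direction $u$, whose defect is handled by the uniqueness/non-branching of the ground-state curve $a \mapsto u_a$ — i.e.\ $\partial_a u$ is a genuine (non-kernel) solution of an inhomogeneous equation $\mathcal{L}(u)(\partial_a u) = -u$, and if $\ker\mathcal{L}(u)|_{\mathcal{H}_0}$ were nontrivial this curve would bifurcate, contradicting Theorem~\ref{sec:introduction-3}. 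The delicate analytic points will be: the functional-analytic setup making $\mathcal{L}(u)$ genuinely self-adjoint with compact resolvent on each sector despite the logarithmic potential growth (this uses that $a + \ln_+\abs{x} \to \infty$, so $X \hookrightarrow L^2$ is compact), and verifying that $\partial_a u \in \tilde X$ with the claimed inhomogeneous equation, for which one differentiates the sharp asymptotic formula of Theorem~\ref{theoremAsymptotics} in $a$ and checks the resulting decay is square-integrable against the logarithmic weight.
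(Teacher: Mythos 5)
Your angular decomposition and your treatment of the sectors $\abs{k}\ge 1$ follow, in spirit, the same route as the paper: positivity of the multipole (Chebyshev/cylindrical) kernel in each sector, a Perron--Frobenius argument giving simplicity of the lowest eigenvalue, the identity $\mathcal{L}_1 u'=0$, and monotonicity in $k$ to exclude kernel in $\abs{k}\ge 2$. One caveat: the identity $\mathcal{L}(u)|_{\mathcal{H}_k}=\mathcal{L}(u)|_{\mathcal{H}_1}+(k^2-1)/r^2$ is false as written, because the nonlocal term also depends on $k$ (in sector $k$ the interaction kernel is $\tfrac{1}{\abs{k}}u(r)u(s)\bigl(\min(r,s)/\max(r,s)\bigr)^{\abs{k}}rs$); the comparison must instead use the pointwise monotonicity $K_k\le K_\ell$ for $\abs{\ell}\le\abs{k}$ tested against the nonnegative sector-$k$ ground state. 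This is a repairable slip.

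The genuine gap is in the radial sector, which you yourself identify as the crux. Your argument rests on two unproven claims. First, that $a\mapsto u_a$ is differentiable with $\mathcal{L}(u)\,\partial_a u=-u$: differentiating the asymptotic formula of Theorem~\ref{theoremAsymptotics} in $a$ does not yield differentiability of the solution map in any useful topology, and the natural tool for it, the implicit function theorem, presupposes exactly the nondegeneracy you are trying to prove, so the argument is circular. Second, that a nontrivial radial kernel would force the branch to bifurcate and thereby contradict Theorem~\ref{sec:introduction-3}: degeneracy of the linearization does not imply bifurcation or non-uniqueness, so no contradiction follows from uniqueness alone. The paper avoids both problems by exploiting the dilation structure instead of the $a$-derivative: it writes $\mathcal{L}_0=\hat{\mathcal{L}}_0+{}$(a rank-one-type term), proves by a Wronskian/integration argument that any nontrivial solution of $\hat{\mathcal{L}}_0\psi=0$ is bounded below by $C_1 u(r)\exp\bigl(C_2\int_1^r \tfrac{ds}{s\abs{u(s)}^2}\bigr)$ and hence unbounded, and then uses the explicit function $\zeta=2u+ru'$ (the generator of the scaling symmetry, which is the rigorous substitute for your $\partial_a u$), which satisfies $\hat{\mathcal{L}}_0\zeta=-2\bigl(a+\int_0^\infty\abs{u}^2 s\ln s\,ds\bigr)u$ with a nonzero constant; any radial kernel element must then be a multiple of $\zeta$, and an integral identity using $a>0$ excludes this. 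If you wish to keep the ``derivative of the branch'' idea, the correct object is this scaling derivative (which exists by an explicit computation, no implicit function theorem needed), and the conclusion still requires a growth lemma of the above type rather than a bifurcation/uniqueness argument.
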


The paper is organized as follows. In Section~\ref{sec:symm-uniq-posit}, 
we set up the variational framework and establish useful preliminary estimates. 
In Section~\ref{Asymptotic behaviour of the groundstate solution}, 
we study the asymptotic decay and prove Theorem \ref{theoremAsymptotics}. Section~\ref{sec:nondegeneracy} is devoted to the proof of Theorem~\ref{theoremNondegenerate}. 
Assuming without loss of generality that $u$ is radial, 
we prove, as a first step, the nondegeneracy of the linearized operator $\mathcal{L}(u)$ restricted to the subspace of radial functions of $\tilde X$, 
that is, we show the triviality of its kernel on that subspace. 
As a second step, using the fact that $u$ and $w$ are radial, we describe by means of an angular decomposition, how the operator $\mathcal{L}(u)$ acts on each subspace $\tilde X \cap L^2_k(\R^2, \C)$, where
\begin{multline*}
  L^2_k (\R^2; \C) := \bigl\{ f \in L^2 (\R^2;\C) \st \text{for almost every \(z \in \R^2 \simeq \C\)
	  and \(\theta \in \R\), }\\
  f (e^{i \theta} z) = e^{i k \theta} f (z) \bigr\}.
\end{multline*} 

Our proof relies  on the multipole expansion of the logarithm kernel \cite{SteinWeiss1971}*{\S IV.5.7}, which is an identity related to the generating function of the Chebyshev polynomials 
and is also known as the cylindrical multipole expansion (see formula \eqref{eqMultipoleExpansion}).
The corresponding multipole expansion of the Newtonian kernel was already used
in the proof of the nondegeneracy of the groundstate solution for the three-dimensional
Choquard equation, see \cite{Lenzmann2009}.

Finally we emphasize that the nondegeneracy of the groundstate is an important spectral assumption in a series of papers on effective solitary waves motion and semi-classical limit for Hartree type equations (see for instance \citelist{\cite{DavSqua}\cite{BonnanodAveniaGhimentiSquassina201}\cite{weiwinter}}). In a forthcoming paper we use our nondegeneracy result for proving existence result of semiclassical states for the planar Schr\"odinger--Newton system.

\medskip
{\bf Acknowledgements.} The research of the authors was supported by GNAMPA project 2016 ``Studio variazionale di fenomeni fisici non lineari'', the Projet de Recherche (Fonds de la Recherche Scientifique--FNRS) T.1110.14 ``Existence and asymptotic behavior of solutions to systems of semilinear elliptic partial differential equations'', the Mandat d'Impulsion Scientifique (FNRS) F.4508.14 ``Patterns, Phase Transitions, 4NLS \& BIon'' and the ARC AUWB-2012-12/17-ULB1- IAPAS.

\section{Variational framework}\label{sec:symm-uniq-posit}

% \subsection{Relationship betweent the Choquard equation and the Newton--Schr\"odinger system}
We begin by showing that the planar Choquard equation \eqref{prob2}
can be derived from the Newton--Schr\"odinger system by a formal inversion.

Let us consider a classical solutions $(u,w)$ of the planar Schr\"odinger--Newton system
\begin{equation}
\label{eq:17}
\left\{
\begin{aligned}
-\Delta u + a u &=w u  &&\qquad \text{in $\R^2$,}\\
-\Delta w &= u^2 &&\qquad \text{in $\R^2$},\\
\end{aligned}
\right.
\end{equation}
where $a$ is positive constant, subject to the conditions
\begin{equation}\label{eq:22-0}
u \in L^\infty(\R^2) \qquad \text{and} \qquad w(x) \to -\infty  \quad \text{as $\abs{x} \to \infty$.}
\end{equation}

By Agmon's Theorem (see \cite{agmon}), \eqref{eq:17} and \eqref{eq:22-0} imply that
\begin{equation}
\label{eq:25}
u(x)= o(e^{-\alpha \abs{x}}) \qquad \text{as $\abs{x} \to \infty$ for every $\alpha >0$.}
\end{equation}
Moreover, since every semibounded harmonic function $\R^2 \to \R$ is
constant, we have
\begin{equation}
\label{eq:26}
w(x)= c + \frac{1}{2 \pi} \int_{\R^2} \ln \frac{1}{\abs{x - y}} \abs{u (y)}^2\dif y, 
\end{equation}
for every $x \in\R^2$ and some constant $c \in \R$.

\medskip
We recall that the solutions for which $u$ is positive are known to enjoy symmetry properties 
\cite{CW}*{Theorem 6.1} up  to the symmetries of the problem. 
Precisely, the following result holds. We write $u_\lambda(\cdot)$ to denote $\lambda^2 u (\lambda \cdot)$ for $\lambda\ne0$.
\begin{theorem} \label{sec:symm-uniq-posit-11}
Let $a>0$. If $(u, w)$ is a classical solution of \eqref{eq:17} and \eqref{eq:22-0} with $u > 0$ in $\R^2$, then, up to translation, the functions $u$ and $w$ are radially symmetric and strictly radially decreasing. 
Moreover, if $(\Tilde{u}, \Tilde{w})$ is another classical  solution of \eqref{eq:17} and \eqref{eq:22-0} with $\Tilde{u} > 0$ in $\R^2$, then the exists $x_0 \in \R^2$ and $\lambda > 0$ such that
%$(\Tilde{u}, \Tilde{w}) = (u_\lambda, w_\lambda)(\cdot - x_0)$, where
for each $x \in \R^2$,
\begin{equation*}
\left\{\begin{array}{rl}\Tilde{u}(x)\!\!\!\!  &= u_\lambda(x-x_0)%:=\lambda^2 u (\lambda (x-x_0)) 
\\
\Tilde{w}(x)\!\!\!\!  &= w_{\lambda}(x-x_0)%:= \lambda^2 w (\lambda (x-x_0)) 
+ a \bigl(1 - \lambda^2\bigr).
\end{array}\right.
\end{equation*}
\end{theorem}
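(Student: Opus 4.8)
The plan is to reduce the planar Schr\"odinger--Newton system to the scalar logarithmic Choquard equation \eqref{prob2}, where Theorem~\ref{sec:introduction-3} already supplies symmetry, monotonicity and uniqueness, and to use the scaling invariance of \eqref{eq:17}--\eqref{eq:22-0} to dispose of the free additive constant in the representation \eqref{eq:26}. As a routine preliminary one checks that, for every $\lambda>0$, the transformation $S_\lambda:(u,w)\mapsto(u_\lambda,w_\lambda+a(1-\lambda^2))$ maps classical solutions of \eqref{eq:17}--\eqref{eq:22-0} to classical solutions (verify the two equations and that $u_\lambda\in L^\infty(\R^2)$ and $w_\lambda+a(1-\lambda^2)\to-\infty$), so the maps $S_\lambda$ together with the translations of $\R^2$ act on the set of positive solutions.

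Given a positive classical solution $(u,w)$: by \eqref{eq:25}, $u$ decays faster than any exponential, so $\abs{u}^2\in L^1(\R^2)\cap L^\infty(\R^2)$ and $\int_{\R^2}\ln(1+\abs{x})\abs{u}^2<\infty$, hence $u\in X$; and by \eqref{eq:26}, $w=c+W_u$ with $W_u:=\frac1{2\pi}\bigl[\ln\frac1{\abs{\cdot}}\ast\abs{u}^2\bigr]$, so $u$ is a positive solution of \eqref{prob2} with $a$ replaced by $a-c$. Theorem~\ref{sec:introduction-3} cannot be applied to $u$ at this stage, since $a-c$ need not be positive: combining the Nehari and Pohozhaev identities for \eqref{prob2} one can write $a-c$ explicitly in terms of $u$, and its sign is not fixed (indeed, among the $S_\lambda$-images of a fixed solution, $a-c$ takes negative values).

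To remove this obstruction I would track the action of $S_\lambda$ on the constant: from $w_\lambda+a(1-\lambda^2)=c_\lambda+W_{u_\lambda}$ and $\ln\frac1{\lambda\abs{x-y}}=\ln\frac1{\abs{x-y}}+\ln\frac1{\lambda}$ one gets $a-c_\lambda=\lambda^2(a-c)+\frac{\lambda^2\ln\lambda}{2\pi}\norm{u}_{L^2}^2$, the parameter of \eqref{prob2} solved by $u_\lambda$. Viewed as a function of $\lambda\in(0,+\infty)$ this is continuous, tends to $0$ as $\lambda\to0^+$, is negative for small $\lambda$, has a single critical point, and is then strictly increasing to $+\infty$; hence there is a unique $\lambda_0>0$ with $c_{\lambda_0}=0$, and $\hat u:=u_{\lambda_0}\in X$ is a positive solution of \eqref{prob2} (with parameter $a$) whose $w$-component is exactly $W_{\hat u}$. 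By Theorem~\ref{sec:introduction-3}, $\hat u$ is, up to translation, radially symmetric and strictly radially decreasing; since $u=(\hat u)_{1/\lambda_0}$ and the scalings $v\mapsto\mu^2v(\mu\cdot)$ preserve this property, the same holds for $u$, and then for $w=c+W_u$, because the planar Gauss identity $\partial_r W_u(r)=-\frac1{2\pi r}\int_{B_r}\abs{u}^2<0$ for $r>0$ makes $W_u$ strictly radially decreasing about the same centre (here $B_r$ is the disc of radius $r$ around it). For uniqueness, a second positive classical solution $(\Tilde u,\Tilde w)$ is normalised the same way to a positive solution $v:=\Tilde u_{\Tilde\lambda_0}$ of \eqref{prob2}; the uniqueness in Theorem~\ref{sec:introduction-3} gives $v=\hat u(\cdot-x_0)$ for some $x_0\in\R^2$, unwinding the two scalings yields $\mu:=\lambda_0/\Tilde\lambda_0$ and $y_0\in\R^2$ with $\Tilde u=u_\mu(\cdot-y_0)$, and then $\Tilde w=w_\mu(\cdot-y_0)+a(1-\mu^2)$, since both sides are of the form $(\text{constant})+W_{\Tilde u}$ and subtracting the two copies of \eqref{prob2} satisfied by $\Tilde u$ forces the constants to coincide.

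The crux of the argument is this normalisation by scaling. One must resist applying Theorem~\ref{sec:introduction-3} directly to $u$ (the reduced parameter $a-c$ may be nonpositive), keep exact account of the way $S_\lambda$ shifts the additive constant of \eqref{eq:26} (the extra term $\frac{\lambda^2\ln\lambda}{2\pi}\norm{u}_{L^2}^2$, coming from the logarithm under dilation), verify that $\lambda\mapsto a-c_\lambda$ indeed attains the value $a$, and --- via the Agmon estimate \eqref{eq:25} --- check that the renormalised function $\hat u=u_{\lambda_0}$ lies in $X$, the space in which Theorem~\ref{sec:introduction-3} is formulated.
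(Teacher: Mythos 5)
Your argument is correct, but note that the paper itself offers no proof of this statement: it is recalled from \cite{CW}*{Theorem 6.1}, and the only related computation in the paper runs in the \emph{opposite} direction, using Theorem~\ref{sec:symm-uniq-posit-11} to fix the additive constant in \eqref{eq:26} and obtain a normalized uniqueness statement for the scalar equation \eqref{prob2}. You instead derive the system-level statement from the quoted scalar result, Theorem~\ref{sec:introduction-3}, by exploiting the scaling invariance $S_\lambda$. Your bookkeeping identity $a-c_\lambda=\lambda^2(a-c)+\tfrac{\lambda^2\ln\lambda}{2\pi}\norm{u}_{L^2(\R^2)}^2$ is exactly the computation the paper performs right after the theorem (with the parameter $\rho$ playing the role of your normalization), and your analysis of this function of $\lambda$ (vanishing limit and negative values near $0$, a single critical point, strict increase to $+\infty$, hence the value $a$ attained exactly once) is sound; so are the transfer of radial symmetry and strict monotonicity back through the scaling, the identity $W_u'(r)=-\tfrac{1}{2\pi r}\int_{B_r}\abs{u}^2<0$ for the $w$-component, and the final comparison of the two additive constants using $\Tilde u>0$. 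Two small points deserve a sentence in a written-out version: membership of $\hat u$ in $X$ requires $\nabla u\in L^2(\R^2)$, which your Agmon-based remark does not cover but which follows from interior elliptic gradient estimates, since $-\Delta u=(w-a)u$ with $w$ of at most logarithmic growth and $u$ superexponentially decaying; and your proof is only as self-contained as Theorem~\ref{sec:introduction-3}, which is quoted from the same reference \cite{CW}, so what you gain is a transparent in-paper reduction of the system statement to the scalar one rather than an argument independent of that source.
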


It follows from Theorem~\ref{sec:symm-uniq-posit-11}, that the solution \( (u, w)\) is unique up to translations, under a \emph{suitable additional condition at infinity} on \(w\). Indeed, we know from 
\eqref{eq:26} that there exists $c \in\R$ such that 
\[
\frac{1}{2 \pi} \int_{\R^2} \ln \frac{1}{\abs{x - y}} \abs{u (y)}^2\dif y -w(x) = c. 
\] 
Therefore, if $\rho>0$ and \((\Tilde{u}, \Tilde{w})\) is another classical solution of \eqref{eq:17} and \eqref{eq:22-0}, then, with \(\lambda > 0\) given by Theorem~\ref{sec:symm-uniq-posit-11},
\begin{multline*}
 \frac{1}{2 \pi} \int_{\R^2} \ln \frac{\rho}{\abs{x - y}} \abs{\Tilde u (y)}^2 \dif y
 - \Tilde w (x)\\
 = \lambda^2 \Bigl(\frac{1}{2 \pi} \int_{\R^2} \ln \frac{1}{\abs{\lambda x - y}} \abs{u (y)}^2 \dif y
 - w (\lambda x)\Bigr) + \frac{\lambda^2\ln \lambda\rho}{2 \pi} \int_{\R^2} \abs{u}^2 + a \bigl(\lambda^2 - 1\bigr)\\
 = \lambda^2 c+ \frac{\lambda^2\ln \lambda\rho}{2 \pi} \int_{\R^2} \abs{u}^2 + a \bigl(\lambda^2 - 1\bigr).
\end{multline*}
Since the right hand side is an increasing continuous function of \(\lambda\) that takes \(-a\) as a limit at \(0\) and diverges to \(+\infty\) at \(+\infty\), there exists a unique solution \(\Tilde{u}\) such that 
\[
 \frac{1}{2 \pi} \int_{\R^2} \ln \frac{\rho}{\abs{x - y}} \abs{\Tilde u (y)}^2 \dif y
 - \Tilde w (x) = 0.
\]
In particular, the asymptotic boundary condition  
$$
 \lim_{x \to \infty}  w (x) - \frac{1}{2 \pi} \int_{\R^2} \ln \frac{\rho}{\abs{x - y}} \abs{u (y)}^2 \dif y
 = 0
$$
implies uniqueness of the solution up to translations. Because of the decay of $u$ at infinity, this is equivalent with requiring the asymptotic condition
$$
  \lim_{x \to \infty} w (x) - \frac{1}{2\pi} \ln \frac{\rho}{\abs{x}} \int_{\R^2} \abs{u}^2 = 0.
$$
Fixing thus \(\rho > 0\), we are reduced to consider the integro-differential equation
\begin{equation}
\label{eq:56bis}
-\Delta u + a u =  \frac{1}{2 \pi}  \Bigl[\ln \frac{1}{|\cdot|} \ast \abs{u}^2 \Bigr] \ u  \qquad \text{in $\R^2$}.
\end{equation}
Solutions of \eqref{eq:56bis} are formally critical points of the functional
\[
 \frac{1}{2} \int_{\R^2} \abs{\nabla u}^2 +a  \abs{u}^2 - \frac{1}{8 \pi} \int_{\R^2} \ln \frac{1}{\abs{x - y}} \abs{u (x)}^2 \abs{u (y)}^2 \dif x \dif y.
\]

The first integral is the norm on the Sobolev space \(H^1 (\R^2)\) induced by the scalar product
\[
  \scalprod{u}{v} = \int_{\R^2} \bigl(\nabla u \cdot \nabla v+ a\,
uv \bigr), \qquad \text{for $u,v \in H^1(\R^2)$},
\]
by 
\(\norm{u}^2:= \scalprod{u}{u}\) for each $u \in H^{1}(\R^2)$.
The second integral is not continuous on \(H^1 (\R^2)\) \citelist{\cite{Stubbe}\cite{CW}}.
However, an adequate functional setting that we now recall has been introduced by J.\thinspace{}Stubbe \cite{Stubbe} who used a smaller space with a stronger norm. 
One first defines for the functions \(f, g : \R^2 \to \R\), the three symmetric bilinear forms
\begin{align*}
 B^+ (f, g) &= \frac{1}{2 \pi} \iint_{\R^2 \times \R^2} \ln_+ \frac{1}{\abs{x - y}} f(x) g (y) \dif x \dif y,\\
 B^- (f, g) &= \frac{1}{2 \pi} \iint_{\R^2 \times \R^2} \ln_+ \abs{x - y} f (x) g (y) \dif x \dif y,\\
 B (f, g) & = \frac{1}{2 \pi} \iint_{\R^2 \times \R^2}\ln \frac{1}{\abs{x - y}} f (x) g (y) \dif x \dif y,
\end{align*}
%where \(\ln_+ s = (\ln s)_+\),
whenever the integrand is Lebesgue measurable,
so that in particular,
\[
 B (f, g) = B^+ (f, g) - B^- (f, g).
\]
The classical Young convolution inequality, see for example \cite{LiebLoss2001}*{theorem 4.2}, implies
\[
 \abs{B^+ (f, g)}
 \le \frac{1}{2 \pi} \Bigl(\int_{\R^2} \Bigl(\ln_+ \frac{1}{\abs{x}} \Bigr)^\frac{p}{p - 2}\dif x\Bigr)^{2 - \frac{2}{p}}
 \norm{f}_{L^p(\R^2)} \norm{g}_{L^p(\R^2)}
 %\Bigl(\int_{\R^2} \abs{f}^p\Bigr)^\frac{1}{p}\Bigl(\int_{\R^2} \abs{g}^p\Bigr)^\frac{1}{p}
\]
for every \(p \in (2, +\infty)\).
On the other hand, we have for every \(x, y \in \R^2\)
\[
  \ln_+ \abs{x - y} \le \ln_+ (\abs{x} + \abs{y})
  \le \ln_+ \abs{x} + \ln_+ \abs{y},
\]
so that 
\[
\begin{split}
 \abs{B^- (f, g)}
 &= \frac{1}{2 \pi} \Bigabs{\iint_{\R^2 \times \R^2} \ln_+ \abs{x - y} f (x) g (y) \dif x \dif y}\\
 &\le \frac{1}{2 \pi}\left(\norm{g}_{L^1(\R^2)} \int_{\R^2} \abs{f (x)} \ln_+ \abs{x} \dif x 
 %\int_{\R^2} \abs{g}
 + \norm{f}_{L^1(\R^2)}\int_{\R^2} \abs{g (x)} \ln_+ \abs{x} \dif x\right). 
 %\int_{\R^2} \abs{f}.
\end{split}
\]

We have thus proved  

\begin{proposition}
For every \(p > 2\), the bilinear form \(B\) is well defined and bounded on \(Y \times Y\), where the space
\[
 Y = \Bigl\{ f : \R^2 \to \R \st \int_{\R^2} \Big(\abs{f(x)}^p + \abs{f(x)} (1 + \ln_+ \abs{x})\Big)\dif x < \infty \Bigr\},
\]
is endowed with the norm defined for \(f \in Y\) by
\[
 \norm{f}_{Y} = \norm{f}_{L^p(\R^2)}
 %\Bigl(\int_{\R^2} \abs{f}^p \Bigr)^\frac{1}{p}
 + \norm{f}_{L^1(\R^2)} + \int_{\R^2} \abs{f (x)}\ln_+ \abs{x} \dif x.
\]
\end{proposition}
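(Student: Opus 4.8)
The plan is to build on the decomposition \(B = B^+ - B^-\) and the two estimates just established, in three short steps: first check that \(B(f, g)\) is an absolutely convergent integral for \(f, g \in Y\) (so that the difference \(B^+(f,g) - B^-(f,g)\) is meaningful), then bound \(B^-\), then bound \(B^+\). For the first step I would apply the estimate for \(B^-\) and the Young-type estimate for \(B^+\) with \(\abs{f}\) and \(\abs{g}\) in place of \(f\) and \(g\); since \(\norm{\abs{f}}_Y = \norm{f}_Y\) and \(\norm{\abs{f}}_{L^q(\R^2)} = \norm{f}_{L^q(\R^2)}\), this shows that both \(\iint_{\R^2 \times \R^2} \ln_+ \tfrac{1}{\abs{x-y}}\,\abs{f(x)}\,\abs{g(y)}\,\dif x\,\dif y\) and \(\iint_{\R^2 \times \R^2} \ln_+ \abs{x-y}\,\abs{f(x)}\,\abs{g(y)}\,\dif x\,\dif y\) are finite. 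Using the pointwise identity \(\ln \tfrac{1}{\abs{x-y}} = \ln_+ \tfrac{1}{\abs{x-y}} - \ln_+ \abs{x-y}\), this yields \(\iint_{\R^2 \times \R^2} \abs{\ln \abs{x-y}}\,\abs{f(x)}\,\abs{g(y)}\,\dif x\,\dif y < \infty\), so that by Fubini's theorem \(B(f, g)\) is a well-defined real number equal to \(B^+(f, g) - B^-(f, g)\).

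For \(B^-\), I would simply note that in the estimate already obtained each of the two terms \(\norm{g}_{L^1(\R^2)} \int_{\R^2} \abs{f(x)} \ln_+ \abs{x}\,\dif x\) and \(\norm{f}_{L^1(\R^2)} \int_{\R^2} \abs{g(x)} \ln_+ \abs{x}\,\dif x\) is \(\le \norm{f}_Y \norm{g}_Y\) directly from the definition of \(\norm{\cdot}_Y\), whence \(\abs{B^-(f, g)} \le \tfrac{1}{\pi} \norm{f}_Y \norm{g}_Y\). For \(B^+\), the Young-type estimate gives \(\abs{B^+(f, g)} \le C_p \norm{f}_{L^p(\R^2)} \norm{g}_{L^p(\R^2)} \le C_p \norm{f}_Y \norm{g}_Y\), where
\[
 C_p = \frac{1}{2\pi} \Bigl( \int_{\R^2} \Bigl( \ln_+ \frac{1}{\abs{x}} \Bigr)^{\frac{p}{p-2}} \dif x \Bigr)^{2 - \frac{2}{p}}.
\]
The one point that genuinely needs an argument is that \(C_p < \infty\): the integrand vanishes outside the unit disc, and in polar coordinates with the substitution \(\abs{x} = e^{-t}\) one finds \(\int_{\R^2} (\ln_+ \tfrac{1}{\abs{x}})^{p/(p-2)}\,\dif x = 2\pi \int_0^\infty t^{p/(p-2)} e^{-2t}\,\dif t\), a convergent Gamma-type integral for every \(p > 2\). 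Combining the two bounds yields \(\abs{B(f, g)} \le (C_p + \tfrac{1}{\pi}) \norm{f}_Y \norm{g}_Y\), which is the assertion.

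I do not expect any real obstacle, since the analytic substance — Young's convolution inequality for \(B^+\), and the elementary bound \(\ln_+ \abs{x-y} \le \ln_+ \abs{x} + \ln_+ \abs{y}\) for \(B^-\) — is already in place, and the rest is bookkeeping with the norm of \(Y\). The closest thing to a delicate point, and the only place where the hypothesis \(p > 2\) intervenes, is the finiteness of \(C_p\): it hinges on the local integrability near the origin of a \emph{positive} power of \(\ln \tfrac{1}{\abs{x}}\), whereas for \(p = 2\) that power is infinite and for \(p < 2\) it is negative, so that the integrand would blow up non-integrably as \(\abs{x} \to 1\).
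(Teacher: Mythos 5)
Your route is the paper's own: decompose \(B = B^+ - B^-\), bound \(B^-\) via \(\ln_+\abs{x-y} \le \ln_+\abs{x} + \ln_+\abs{y}\), and bound \(B^+\) via Young's convolution inequality; your two additions (absolute convergence, so that \(B = B^+ - B^-\) is meaningful, and the finiteness of the kernel integral) are sound bookkeeping. However, the one step you lean on for \(B^+\), namely \(\abs{B^+(f,g)} \le C_p \norm{f}_{L^p(\R^2)}\norm{g}_{L^p(\R^2)}\) with \emph{only} \(L^p\) norms on the right, is not a consequence of Young's inequality when \(p>2\), and it is in fact false: the trilinear Young inequality needs exponents \(\ge 1\) with \(\frac1p + \frac1p + \frac1q = 2\), and for \(p>2\) this forces \(\frac1q = 2 - \frac2p > 1\). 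Concretely, take \(f = g = \chi_{B_R}\): every point of \(B_{R-1}\) contributes \(\int_{B_1} \ln\frac{1}{\abs{z}}\dif z = \frac{\pi}{2}\) to the double integral, so \(B^+(f,g) \ge \frac{\pi (R-1)^2}{4}\), while \(\norm{f}_{L^p}\norm{g}_{L^p} \sim R^{4/p} = o(R^2)\) as \(R \to \infty\). So your closing diagnosis — that the only place where \(p>2\) intervenes is the finiteness of \(C_p\) — misses where the real constraint sits; no choice of constant can rescue that intermediate inequality.

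The proposition itself is true, and the repair stays inside the same framework, but the \(L^1\)-part of the \(Y\)-norm must be used for \(B^+\) as well, not only for \(B^-\). Since \(Y \hookrightarrow L^1(\R^2) \cap L^p(\R^2)\) with \(p>2\), interpolation gives \(\norm{f}_{L^2} \le \norm{f}_{L^1}^{\theta}\norm{f}_{L^p}^{1-\theta} \le \norm{f}_Y\), and Young with the admissible triple \((2,2,1)\) — the kernel \(\ln_+\frac{1}{\abs{\cdot}}\) lies in \(L^1(\R^2)\) with \(\frac{1}{2\pi}\int_{B_1}\ln\frac{1}{\abs{z}}\dif z = \frac14\) — yields \(\abs{B^+(f,g)} \le \frac14 \norm{f}_{L^2}\norm{g}_{L^2} \le \frac14\norm{f}_Y\norm{g}_Y\); alternatively the triple \(\bigl(\frac{2p}{p+2}, \frac{2p}{p+2}, \frac{p}{p-2}\bigr)\) is admissible and is presumably what the paper's display intends, again with \(\norm{f}_{L^{2p/(p+2)}} \le \norm{f}_Y\) by interpolation between \(L^1\) and \(L^p\). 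To be fair, the paper's own displayed estimate for \(B^+\) has the same defect, so you inherited rather than created the gap; still, as written your argument contains a step that fails, and the fix above is precisely what makes the boundedness hold on \(Y\times Y\) rather than on \(L^p\times L^p\), where it is false.
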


In order to go back to our original functional, we first note that the multiplication map 
\((u, v) \mapsto u v\) is a bilinear map which is bounded from \(Z \times Z\) to \(Y\), where
the space \(Z\) is defined by
\[
 Z = \Bigl\{ u : \R^2 \to \R \st \int_{\R^2} \abs{u(x)}^{2p} + (1 + \ln_+ \abs{x})\abs{u(x)}^2 \dif x < \infty \Bigr\}
\]
is endowed with the norm defined for \(u \in Z\) by
\[
 \norm{u}_{Z} = \Bigl(\int_{\R^2} \abs{u}^p \Bigr)^\frac{1}{2 p}
 + \Bigl(\int_{\R^2} (1 + \ln_+ \abs{x})\abs{u (x)}^2  \dif x\Bigr)^\frac{1}{2}.
\]
We now define the functional space 
\[
X:= \Bigl\{u \in H^1(\R^2)\st \int_{\R^2} \ln_+ \abs{x}\, \abs {u (x)}^2\dif x < \infty\Bigr\} \]
endowed with
norm defined through
\[
  \norm{u}_X^2:= \int_{\R^2} \abs{\nabla u(x)}^2 + \bigl(1 + \ln_+ \abs{x}\bigr) \abs{u (x)}^2\dif x,
\]
on which we consider the functional 
\[
  I (u) = \int_{\R^2} (\abs{\nabla u}^2 + a \abs{u}^2) \dif x - \frac{1}{4} B (u^2, u^2). 
\]
Since the second term of the functional \(I\) is the composition of the continuous linear embedding of \(X\) into \(Z\), a continuous bilinear map from \(Z \times Z\) to \(Y\) and a  continuous bilinear map from \(Y \times Y\) to \(\R\), it follows that the functional \(I\) is \emph{smooth} on the space \(X\). Moreover, its first two derivatives are given by
\[
 I' (u)[\varphi] = \int_{\R^2} \bigl(\nabla u \cdot \nabla \ \varphi + a\, u \varphi\bigr) - B (u^2, u \varphi),
\]
and 
\[
 I'' (u)[\varphi, \psi]= \int_{\R^2} \bigl(\nabla \varphi \cdot \nabla \psi + a\, \varphi \psi\bigr) - B (u^2, \varphi \psi)
 - 2 B (u \varphi, u \psi),
\]    
for each \(u, \varphi, \psi \in X\).

\section{Asymptotic behaviour of the groundstate solution}\label{Asymptotic behaviour of the groundstate solution}

The goal of the present section is to study the asymptotics of the groundstate solution to \eqref{prob2} and prove Theorem~\ref{theoremAsymptotics}.

\subsection{Rough asymptotics for linear Schr\"odinger operators with a logarithmic potential}

We first construct upper and lower solutions to a linear problem related to %our problem 
\eqref{prob2}. These estimates are too rough to deduce Theorem~\ref{theoremAsymptotics}.
We state them because the proof is quite elementary and might help the reader to understand the more sophisticated construction in the proof of Lemma~\ref{lemmaRefinedUpperAndLowerSolutions} below. 
Moreover, the reader can verify that these rough estimates would be sufficient to obtain the proof of Theorem~\ref{theoremAsymptotics} concerning the nondegeneracy which is given in Section~\ref{sec:nondegeneracy}.

\begin{lemma}
\label{lemmaRoughUpperAndLowerSolutions}
Let \(V \in C (\R^N)\) be such that
\[
 \lim_{\abs{x} \to \infty} \frac{V (x)}{\ln \abs{x}} = \lambda > 0.
\]
For every \(\varepsilon > 0\), 
%there exists \(R > 0\) and 
there exist \(\underline{W}_\varepsilon\), \(\overline{W}_\varepsilon\) and \(R_\varepsilon > 0\) such that 
\begin{gather*}
  -\Delta \underline{W}_\varepsilon + V \underline{W}_\varepsilon \le 0 \qquad \text{in \(\R^2 \setminus B_{R_\varepsilon}\)},\\
  -\Delta \overline{W}_\varepsilon + V \overline{W}_\varepsilon \ge 0\qquad \text{in \(\R^2 \setminus B_{R_\varepsilon}\)},\\
  \lim_{\abs{x} \to \infty} \frac{\underline{W}_\varepsilon (x)}{\exp\, (-(1 + \varepsilon) \abs{x} \sqrt{\lambda \ln \abs{x}}\,)} = 1,\\
  \lim_{\abs{x} \to \infty} \frac{\overline{W}_\varepsilon (x)}{\exp\, (-(1 - \varepsilon) \abs{x} \sqrt{\lambda \ln \abs{x}}\,)} = 1.
\end{gather*}
\end{lemma}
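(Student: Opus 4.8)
The plan is to construct the barrier functions explicitly as radial functions of the form $\exp(-\phi(\abs{x}))$ for a well-chosen convex profile $\phi$, and then to verify the differential inequalities by a direct computation that exploits the fact that the leading-order term of $-\Delta + V$ acting on such an ansatz is $(\abs{\phi'}^2 + V)\exp(-\phi)$ up to lower-order corrections. Concretely, for a radial function $W(x) = e^{-\phi(r)}$ with $r = \abs{x}$ in $\R^2$ one has
\[
 -\Delta W + V W = \Bigl( -\abs{\phi'(r)}^2 + \phi''(r) + \frac{\phi'(r)}{r} + V \Bigr) e^{-\phi(r)},
\]
so everything reduces to choosing $\phi$ so that the bracketed quantity has the right sign for large $r$, while $\phi(r)$ itself has the prescribed asymptotics $(1\pm\varepsilon)\, r \sqrt{\lambda \ln r}$.

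First I would fix $\varepsilon > 0$ and, using the hypothesis $V(x)/\ln\abs{x} \to \lambda$, pick $R_\varepsilon$ so large that $(1-\varepsilon/2)\lambda \ln r \le V(x) \le (1+\varepsilon/2)\lambda \ln r$ for $\abs{x} = r \ge R_\varepsilon$. For the \emph{supersolution} I would take $\phi(r) = (1-\varepsilon)\int_{R_\varepsilon}^r \sqrt{\lambda \ln s}\,\dif s$ plus possibly a small logarithmic correction $-c \ln\ln r$ to absorb the curvature terms; then $\abs{\phi'(r)}^2 = (1-\varepsilon)^2 \lambda \ln r$, which is strictly smaller than $V$ for $r$ large, with a gap of order $\ln r$, whereas $\phi''(r) = (1-\varepsilon)\tfrac{\lambda}{2 r \sqrt{\lambda \ln r}}$ and $\phi'(r)/r = (1-\varepsilon)\tfrac{\sqrt{\lambda \ln r}}{r}$ are both $o(\ln r)$ — in fact $o(1)$ — so the bracket $-\abs{\phi'}^2 + \phi'' + \phi'/r + V$ is positive for $r \ge R_\varepsilon$ after possibly enlarging $R_\varepsilon$. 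Symmetrically, for the \emph{subsolution} I would take $\phi(r) = (1+\varepsilon)\int_{R_\varepsilon}^r \sqrt{\lambda \ln s}\,\dif s$, so that $\abs{\phi'(r)}^2 = (1+\varepsilon)^2 \lambda \ln r > V$ with a gap of order $\ln r$ that again dominates the curvature corrections, making the bracket negative. In both cases the asymptotic normalization $\underline{W}_\varepsilon(x) / \exp(-(1+\varepsilon)\abs{x}\sqrt{\lambda \ln\abs{x}}) \to 1$ (resp. with $1-\varepsilon$) follows from the elementary estimate $\int_{R_\varepsilon}^r \sqrt{\lambda \ln s}\,\dif s = r\sqrt{\lambda \ln r}\,(1 + o(1))$ as $r \to \infty$, once one checks that any logarithmic correction term contributes only a factor tending to a constant, which can then be normalized away by scaling $\underline{W}_\varepsilon$ and $\overline{W}_\varepsilon$ by suitable positive constants.

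The main obstacle I anticipate is bookkeeping the error terms carefully enough: the correction terms $\phi''$ and $\phi'/r$, although $o(1)$, must be controlled \emph{relative to the gap} between $\abs{\phi'}^2$ and $V$, and that gap is only of order $\ln r$ (not of order $r$), so one has genuine room but must be honest about it; similarly, if one inserts a $\ln\ln r$ correction into $\phi$ to get cleaner inequalities, its derivative contributes a term of order $1/(r \ln r)$ to $\phi'$, hence a cross term of order $\sqrt{\ln r}/(r \ln r)$ in $\abs{\phi'}^2$, which is harmless but needs to be tracked. A secondary subtlety is that the stated asymptotic normalization involves the \emph{exact} prefactor $1$, so after the construction one renormalizes the barriers by constants; since the lemma only asserts existence of \emph{some} $\underline{W}_\varepsilon, \overline{W}_\varepsilon$ satisfying the inequalities on $\R^2 \setminus B_{R_\varepsilon}$ together with these limits, multiplying by a positive constant is legitimate and costs nothing. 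Finally, I would remark that the factor $\frac{\phi'(r)}{r}$ in the radial Laplacian is the only place the dimension $N = 2$ enters; in dimension $N$ it would be $\frac{(N-1)\phi'(r)}{r}$, still $o(1)$, so the construction is dimension-robust, consistent with the lemma being stated for $V \in C(\R^N)$ even though only $\R^2$ is used later.
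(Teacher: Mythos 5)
Your construction is the same in spirit as the paper's (an explicit radial WKB-type barrier \(e^{-\phi(r)}\), with the sign of \(-\abs{\phi'}^2+V\) giving the differential inequality and the \(O(1)\) curvature terms \(\phi''+\phi'/r\) dominated by the gap of size \(\ln r\)), and that part of your argument is sound. The genuine gap is in the verification of the two limit conditions. With your choice \(\phi(r)=(1\pm\varepsilon)\int_{R_\varepsilon}^{r}\sqrt{\lambda\ln s}\,\dif s\), the ratio against the target is
\[
\frac{e^{-\phi(r)}}{\exp\bigl(-(1\pm\varepsilon)\,r\sqrt{\lambda\ln r}\bigr)}
=\exp\Bigl((1\pm\varepsilon)\sqrt{\lambda}\,\Bigl(R_\varepsilon\sqrt{\ln R_\varepsilon}+\int_{R_\varepsilon}^{r}\frac{\dif s}{2\sqrt{\ln s}}\Bigr)\Bigr),
\]
since \(\frac{\dif}{\dif r}\bigl(r\sqrt{\ln r}\bigr)=\sqrt{\ln r}+\frac{1}{2\sqrt{\ln r}}\). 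The last integral grows like \(r/(2\sqrt{\ln r})\), so this ratio diverges (stretched-exponentially fast); it does not tend to \(1\), nor to any constant that could be scaled away. The estimate \(\int_{R_\varepsilon}^{r}\sqrt{\lambda\ln s}\,\dif s=r\sqrt{\lambda\ln r}\,(1+o(1))\) only controls the exponent up to a multiplicative \(1+o(1)\), which is far too weak to control the exponential itself: you need the exponents to agree up to \(o(1)\) additively. The same objection applies to the optional \(-c\ln\ln r\) correction you mention, which contributes a factor \((\ln r)^{c}\), again not a constant.

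The repair is immediate and is exactly what the paper does: take the phase to be literally \(\phi(r)=(1\pm\varepsilon)\sqrt{\lambda}\,r\sqrt{\ln r}\), i.e.\ \(W(x)=\exp\bigl(-\tau\abs{x}\sqrt{\ln\abs{x}}\bigr)\) with \(\tau=(1\pm\varepsilon)\sqrt{\lambda}\), so that the two limits hold trivially (the ratio is identically \(1\)). Then \(\phi'(r)=\tau\bigl(\sqrt{\ln r}+\frac{1}{2\sqrt{\ln r}}\bigr)\), hence \(\abs{\phi'}^2=\tau^2\bigl(\ln r+1+\frac{1}{4\ln r}\bigr)\) differs from \(\tau^2\ln r\) only by \(O(1)\), and
\[
-\Delta W+VW=\Bigl(\ln\abs{x}\Bigl(\frac{V(x)}{\ln\abs{x}}-\tau^2\Bigr)+O(1)\Bigr)W,
\]
so the sign is decided for large \(\abs{x}\) by \(\tau^2\gtrless\lambda\), exactly the bookkeeping you already carried out; no logarithmic correction in the phase is needed. (Your remark that the dimension enters only through the \(\phi'/r\) term is correct and consistent with the paper's computation; also note that in the paper's final line the labels of \(\underline{W}_\varepsilon\) and \(\overline{W}_\varepsilon\) appear interchanged relative to the statement, the subsolution being the one with \(\tau=(1+\varepsilon)\sqrt{\lambda}\).)
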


\begin{proof}
We define, for every \(\tau \in \R\), the function 
%\(w_\tau : (1, +\infty)\to \R\) for each \(r \in (1, +\infty)\) by
\[
w_\tau : (1, +\infty)\to \R : r\mapsto  \exp (- \tau r \sqrt{\ln r}).
\]
We compute directly for each \(r \in (1 +\infty)\),
\[
 w_\tau' (r) = -\tau\biggl(\sqrt{\ln r} + \frac{1}{2
\sqrt{\ln r}}\biggr) w_\tau (r)
\]
and 
\[
  w_\tau'' (r) 
  = \tau\Biggl(\tau
    \biggl(\sqrt{\ln r} + 
           \frac{1}{2 \sqrt{\ln r}}\biggr)^2
    - \frac{1}{2 r \sqrt{\ln r}} 
    + \frac{1}{4 r \sqrt{(\ln r)^3}} 
         \Biggr) 
         w_\tau (r).
\]
If we define the function \(W_\tau : \R^2 \setminus B_1\) for each \(x \in \R^2 \setminus B_1\) by \(W_\tau (x) = w_\tau (\abs{x})\), we have
\[
 -\Delta W_\tau (x) + V (x) W_\tau (x)
 =  \biggl(\ln \abs{x} \Bigl(\frac{V (x)}{\ln \abs{x}} - \tau^2 \Bigr)
 + O (1) \biggr) W_\tau (x).
\]
We obtain the conclusion by taking \(R_\varepsilon > 1\) sufficiently large and by choosing \(\underline{W}_{\varepsilon} = W_{\sqrt{\lambda}\,(1 - \varepsilon)}\) and \(\overline{W}_{\varepsilon} = W_{\sqrt{\lambda}\,(1 + \varepsilon)}\).
\end{proof}

We immediately deduce from Lemma \ref{lemmaRoughUpperAndLowerSolutions} some rough asymptotic decay estimates on the solutions of linear equations with a potential growing logarithmically at infinity.

\begin{corollary}
\label{corollaryRoughAsymptotics}
Assume that \(V \in C (\R^N)\) and that there exists $\lambda>0$ such that
\[
 \lim_{\abs{x} \to \infty} \frac{V (x)}{\ln \abs{x}} = \lambda.
\]
If \(u\) is a positive solution of 
\[
 -\Delta u + V u = 0,\quad x\in\R^2,
\]
then
\[
 \lim_{\abs{x} \to \infty} \frac{\ln u (x)}{\abs{x} \sqrt{\ln \abs{x}}} = 
-\sqrt{\lambda}.
\]
\end{corollary}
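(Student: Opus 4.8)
The plan is to sandwich $u$ between the upper and lower solutions produced by Lemma~\ref{lemmaRoughUpperAndLowerSolutions} and then take logarithms. Fix $\varepsilon > 0$ and let $\underline{W}_\varepsilon$, $\overline{W}_\varepsilon$, $R_\varepsilon$ be as in that lemma. On the complement of $B_{R_\varepsilon}$ the functions $u$ and $\overline{W}_\varepsilon$ both solve (or super-solve, in the case of $\overline{W}_\varepsilon$) the equation $-\Delta v + V v \ge 0$, while $u$ and $\underline{W}_\varepsilon$ satisfy $-\Delta v + V v \le 0$ for $\underline{W}_\varepsilon$. I would first renormalise: since $u$ is continuous and positive on the compact sphere $\partial B_{R_\varepsilon}$ and $\overline{W}_\varepsilon$, $\underline{W}_\varepsilon$ are positive there, one can pick constants $c_- , c_+ > 0$ so that $c_- \underline{W}_\varepsilon \le u \le c_+ \overline{W}_\varepsilon$ on $\partial B_{R_\varepsilon}$. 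One also needs the behaviour at infinity: $u \to 0$ (this follows from $u \in X$ together with the elliptic regularity / $C^2$ statement recalled in the introduction, or more directly from Corollary's hypothesis being applied to $w = u$ here; in any case $u$ decays), and $\overline{W}_\varepsilon \to 0$, $\underline{W}_\varepsilon \to 0$.

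Next I would invoke the comparison principle for the operator $-\Delta + V$ on the exterior domain $\R^2 \setminus B_{R_\varepsilon}$. The point is that $V(x) = (\lambda + o(1)) \ln\abs{x} > 0$ for $\abs{x}$ large, so after possibly enlarging $R_\varepsilon$ the zeroth order coefficient is positive and the maximum principle applies on the unbounded domain provided we control the boundary data on $\partial B_{R_\varepsilon}$ and the decay at infinity. Concretely, $c_+ \overline{W}_\varepsilon - u$ is a supersolution of $-\Delta v + V v = 0$ which is nonnegative on $\partial B_{R_\varepsilon}$ and tends to $0$ at infinity; by the maximum principle it is nonnegative throughout $\R^2 \setminus B_{R_\varepsilon}$, hence $u \le c_+ \overline{W}_\varepsilon$ there. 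Symmetrically $u \ge c_- \underline{W}_\varepsilon$. Taking logarithms and dividing by $\abs{x}\sqrt{\ln\abs{x}}$ gives, using the asymptotics of $\overline{W}_\varepsilon$ and $\underline{W}_\varepsilon$ from the lemma,
\[
 -\sqrt{\lambda}\,(1+\varepsilon) \le \liminf_{\abs{x}\to\infty} \frac{\ln u(x)}{\abs{x}\sqrt{\ln\abs{x}}} \le \limsup_{\abs{x}\to\infty} \frac{\ln u(x)}{\abs{x}\sqrt{\ln\abs{x}}} \le -\sqrt{\lambda}\,(1-\varepsilon),
\]
since the contributions of $\ln c_\pm$ and of the polynomial/logarithmic prefactors $\sqrt{\abs{x}}(\ln\abs{x})^{1/4}$ are $o(\abs{x}\sqrt{\ln\abs{x}})$. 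Letting $\varepsilon \to 0$ yields the claimed limit.

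The main obstacle is justifying the comparison principle on the \emph{unbounded} exterior domain: the naive maximum principle needs either a sign condition on the potential (available here for large $\abs{x}$) together with a decay condition on the competing functions at infinity, or a Phragm\'en--Lindel\"of-type argument. I would handle this by working on the bounded annuli $B_R \setminus B_{R_\varepsilon}$, applying the classical maximum principle there (valid because $V \ge 0$ on this region once $R_\varepsilon$ is large), and then letting $R \to \infty$; the boundary term on $\partial B_R$ is controlled because both $u$ and $c_+\overline{W}_\varepsilon$ tend to $0$, so for any fixed $\delta > 0$ one has $u - c_+\overline{W}_\varepsilon \le \delta$ on $\partial B_R$ for $R$ large, giving $u - c_+\overline{W}_\varepsilon \le \delta$ on the annulus and then $\delta \to 0$. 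A secondary point worth a line is the regularity and positivity needed to make sense of ``$u$ is a positive solution'' at the level of the strong maximum principle: by the elliptic regularity recalled in the introduction $u \in C^2(\R^2)$, so all the pointwise comparisons are legitimate. Everything else — the explicit computation of $-\Delta W_\tau + V W_\tau$ and the asymptotics of $w_\tau$ — is already done in Lemma~\ref{lemmaRoughUpperAndLowerSolutions}.
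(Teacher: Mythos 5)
Your sandwiching strategy is exactly how the paper intends this corollary to be read off Lemma~\ref{lemmaRoughUpperAndLowerSolutions} (the paper calls it immediate, and the same comparison scheme is written out in detail in the proof of Corollary~\ref{corollaryRefinedAsymptotics}), and the lower bound $u \ge c_-\underline{W}_\varepsilon$ is complete as you argue it: on the annuli $B_R\setminus B_{R_\varepsilon}$ the weak maximum principle with $V\ge 0$ applies, and on $\partial B_R$ you only need $\underline{W}_\varepsilon\to 0$ together with $u>0$, so no information on the size of $u$ at infinity is required there.

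The genuine gap is in the upper bound. Your annulus argument for $u \le c_+\overline{W}_\varepsilon$ needs $u(x)\to 0$ as $\abs{x}\to\infty$, and the justification you offer (``$u\in X$'', ``apply the hypothesis to $w=u$'') is not available: the corollary, as a statement about the linear equation, assumes only that $u$ is a positive solution, and positivity alone does not force decay. Indeed, for a radial $V>0$ with $V(x)/\ln\abs{x}\to\lambda$, the radial solution with $u(0)=1$, $u'(0)=0$ satisfies $(r u')' = r V u>0$, hence is increasing and grows like the \emph{growing} WKB solution, so the claimed limit fails for it; some restriction on $u$ at infinity must either be assumed or imported from the application (the paper, before invoking the refined corollary, first derives $u(x)=o(e^{-\lambda\abs{x}})$ for the groundstate from the equation itself, and Corollary~\ref{corollaryRefinedAsymptotics} carries an explicit growth hypothesis on $u$). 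To close your upper bound without assuming full decay you can copy the device from the proof of Corollary~\ref{corollaryRefinedAsymptotics}: the computation in Lemma~\ref{lemmaRoughUpperAndLowerSolutions} shows that $W_\tau$ is a supersolution for every $\tau$ with $\tau^2<\lambda$, including $\tau<0$, so one may compare $u$ with $c_+\overline{W}_\varepsilon + \delta W_{-\sigma}$ for a small $\sigma\in(0,\sqrt\lambda)$; the set where this comparison function lies below $u$ is then bounded (provided $u$ grows more slowly than the growing solution, which is the hypothesis that must be added), the weak maximum principle on bounded sets applies, and letting $\delta\to 0$ gives $u\le c_+\overline{W}_\varepsilon$. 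A minor further slip: the prefactors $\sqrt{\abs{x}}\,(\ln\abs{x})^{1/4}$ you mention belong to the refined Lemma~\ref{lemmaRefinedUpperAndLowerSolutions}, not to the rough barriers used here; this is harmless since such factors are in any case $o(\abs{x}\sqrt{\ln\abs{x}})$ after taking logarithms.
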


\subsection{Refined asymptotics for linear Schr\"odinger operators with a logarithmic potential}

In order to obtain fine asymptotics, we rely on the following construction of upper and lower solutions.

\begin{lemma}
\label{lemmaRefinedUpperAndLowerSolutions}      
Assume \(V \in C (\R^N)\) is such that for some \(\beta \in (0, 1]\),
\[
 \frac{V' (r)}{V (r)} = \frac{1}{r \ln r} + o \Bigl(\frac{1}{r^{\beta + 
1}}\Bigr)
\]
as \(r \to \infty\).
Then, for \(R > 0\) sufficiently large,
there exist radial functions \(\underline{W}_\pm \in C^2 (\R^2 \setminus B_R)\) 
and \(\overline{W}_\pm \in C^2 (\R^2 \setminus B_R)\) such that \(\underline{W}_\pm > 0\) 
and \(\overline{W}_\pm > 0\) 
in \(\R^2 \setminus B_R\),
\begin{gather*}
  -\Delta \underline{W}_\pm (x) + V(\abs{x})\underline{W}_\pm(x) \le 0\qquad \text{for \(x \in \R^2 \setminus B_R\)},\\
  -\Delta \overline{W}_\pm(x) + V(\abs{x}) \overline{W}_\pm(x) \ge 0\qquad \text{for \(x \in \R^2 \setminus B_R\)},\\
  \underline{W}_\pm (x) = \bigl(1 + O (\abs{x}^{-\beta})\bigr)\frac{ \exp \Bigl(\pm
\displaystyle \int_R^{\abs{x}} \sqrt{V (s)} \dif s\Bigr)}{\abs{x}^{1/2}\,(\ln 
\abs{x})^{1/4}} 
\\
  \overline{W}_\pm (x) = \bigl(1 + O (\abs{x}^{-\beta})\bigr)\frac{\exp \Bigl(\pm
\displaystyle \int_R^{\abs{x}} \sqrt{V (s)} \dif s\Bigr)}{\abs{x}^{1/2}\,(\ln 
\abs{x})^{1/4} },
\end{gather*}
and 
\[
  \overline{W}_\pm (x)= \underline{W}_\pm (x) \biggl(1 + O \biggl(\frac{1}{\abs{x}^{\beta + 1}}\biggr)\biggr),
\]
as \(\abs{x} \to \infty\).
\end{lemma}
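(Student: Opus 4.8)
The plan is to construct \(\underline W_\pm\) and \(\overline W_\pm\) by a Liouville--Green (WKB) ansatz for the radial equation \(-w'' - \tfrac1r w' + V w = 0\) and then to turn the resulting approximate solutions into genuine sub- and supersolutions by a small, sign-definite perturbation. As a heuristic guide, after the substitution \(w(r) = r^{-1/2}v(r)\) — which removes the first order term at the cost of replacing \(V\) by \(V - \tfrac{1}{4r^2}\) — the Liouville--Green approximation predicts that the decaying and the growing solutions behave like \(V(r)^{-1/4}\exp\bigl(\pm\int_R^r\sqrt{V(s)}\dif s\bigr)\). Multiplying back by \(r^{-1/2}\), and noting that integrating the hypothesis yields \(V(r) = \lambda\ln r\,(1 + O(r^{-\beta}))\) for some \(\lambda > 0\) (so that \(V(r)^{-1/4} = (1 + O(r^{-\beta}))\lambda^{-1/4}(\ln r)^{-1/4}\)), one sees this matches exactly the profile in the statement. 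I would therefore take as starting point the explicit function
\[
  W^0_\pm(x) := \frac{\exp\bigl(\pm\int_R^{\abs{x}}\sqrt{V(s)}\dif s\bigr)}{\abs{x}^{1/2}\,(\ln\abs{x})^{1/4}},
\]
whose only ingredients are \(V\) and \(V'\).

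The first step is a direct computation of the residual \(-\Delta W^0_\pm + V W^0_\pm\). Writing \(W^0_\pm = p\,E\) with \(p(r) = r^{-1/2}(\ln r)^{-1/4}\) and \(E(r) = \exp(\pm\int_R^r\sqrt V)\), the choice of exponent cancels exactly the terms of order \(\sqrt V\), while the factor \((\ln r)^{-1/4}\) is precisely the one cancelling the terms of order \(\tfrac{\sqrt V}{r\ln r}\), once one invokes the explicit part \(\tfrac{V'}{V} = \tfrac{1}{r\ln r} + o(r^{-\beta-1})\) of the hypothesis. What remains is
\[
  -\Delta W^0_\pm + V W^0_\pm = \Bigl(-\frac{1}{4\abs{x}^2}\bigl(1 + o(1)\bigr) \mp \frac{\sqrt{V(\abs{x})}}{2}\,\varepsilon(\abs{x})\Bigr)W^0_\pm, \qquad \varepsilon(r) := \frac{V'(r)}{V(r)} - \frac{1}{r\ln r} = o\bigl(r^{-\beta-1}\bigr),
\]
as \(\abs{x}\to\infty\); here the first term carries a definite sign and the second is a small remainder.

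The second step is the correction. With \(W_\pm = W^0_\pm(1+\psi)\) one expands \(-\Delta W_\pm + V W_\pm = W^0_\pm\bigl[(1+\psi)\rho_0 \mp 2\sqrt V\,\psi' + (\text{higher order in }\psi)\bigr]\), where \(\rho_0\) denotes the factor above; the term \(\mp 2\sqrt V\,\psi'\) dominates because \(\sqrt V\to\infty\), so a small \(\psi\) already reshapes the residual. I would first pick \(\psi = \psi_\pm\) solving the first order equation that cancels \(\rho_0\); it is given by an explicit integral from \(+\infty\), and since \(\int_r^\infty\varepsilon(s)\dif s = o(r^{-\beta})\) and \(\int_r^\infty s^{-2}V(s)^{-1/2}\dif s = O(r^{-1}(\ln r)^{-1/2})\), it satisfies \(\psi_\pm = o(\abs{x}^{-\beta})\). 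Then I would add one last perturbation \(\pm\eta_\pm\), with \(\eta_\pm > 0\) decreasing and \(\eta_\pm = O(\abs{x}^{-\beta-1})\), whose sole purpose is to make the sign of the (now much smaller) remaining residual definite: setting \(\underline W_\pm = W^0_\pm(1 + \psi_\pm - \eta_\pm)\) and \(\overline W_\pm = W^0_\pm(1 + \psi_\pm + \eta_\pm)\) gives, for \(R\) large, \(-\Delta\underline W_\pm + V\underline W_\pm \le 0\) and \(-\Delta\overline W_\pm + V\overline W_\pm \ge 0\) on \(\R^2\setminus B_R\), and positivity is clear. The individual asymptotics follow from \(\psi_\pm = o(\abs{x}^{-\beta})\), while the sharper comparison \(\overline W_\pm = \underline W_\pm(1 + O(\abs{x}^{-\beta-1}))\) holds because, for each sign, \(\underline W_\pm\) and \(\overline W_\pm\) share the very same leading correction \(\psi_\pm\) and differ only through the \(O(\abs{x}^{-\beta-1})\) term \(\eta_\pm\).

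The main obstacle — and the reason the rough Lemma~\ref{lemmaRoughUpperAndLowerSolutions} does not suffice — is the quantitative bookkeeping: every remainder has to be tracked to the precise orders \(\abs{x}^{-\beta}\) (for the individual profiles) and \(\abs{x}^{-\beta-1}\) (for the gap between \(\underline W_\pm\) and \(\overline W_\pm\)), verifying that all the terms generated by \(\psi_\pm\) and \(\eta_\pm\) — the curvature terms \(\psi_\pm''\) and \(\tfrac{1}{r\ln r}\psi_\pm'\), the term \(\eta_\pm''\), and the cross products with \(\rho_0\) — are genuinely negligible against the \(\tfrac{1}{4r^2}\)-term carrying the sign. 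This precision is exactly what distinguishes the refined estimate from the rough one, where no cancellation beyond the leading exponential is exploited.
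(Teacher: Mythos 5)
Your starting point coincides with the paper's: the profile \(W^0_\pm(x)=\exp\bigl(\pm\int_R^{\abs{x}}\sqrt{V}\bigr)\,\abs{x}^{-1/2}(\ln\abs{x})^{-1/4}\), whose residual \(\rho_0 W^0_\pm\) you compute correctly. The way you convert it into sub- and supersolutions, however, has genuine gaps. First, you define \(\psi_\pm\) by integrating \(\rho_0/(2\sqrt V)\) so as to cancel the residual; but the residual of \(W^0_\pm(1+\psi_\pm)\) then contains \(\psi_\pm''\), and estimating it requires a bound on the derivative of \(V'/V\) (equivalently on \(V''\)), whereas the hypothesis only prescribes the size of \(V'/V-\tfrac{1}{r\ln r}\), not of its derivative; so the claim that the curvature term \(\psi_\pm''\) is negligible cannot be verified. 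The paper avoids this entirely by never cancelling: it uses the explicit factor \(1-\tau r^{-\beta}\) (so only \(V\) and \(V'\) are ever differentiated) and chooses \(\tau=\pm1\) so that the created term \(\pm\sqrt{V}\,\tfrac{2\beta\tau}{r^{\beta+1}}\) has the desired sign and \emph{dominates} both the error \(o(\sqrt V\,r^{-\beta-1})\) coming from the hypothesis and the curvature \(O(r^{-2})\). Second, your sign-fixing perturbation is quantitatively too small: with \(\eta_\pm=O(r^{-\beta-1})\) positive and decreasing, its contribution to the residual is at best of order \(\sqrt V\,r^{-\beta-2}\), which for \(\beta<1\) cannot dominate the leftover term \(\psi_\pm\rho_0=o(\sqrt V\,r^{-2\beta-1})\); moreover your closing appeal to the \(\tfrac{1}{4r^2}\) term as the sign carrier is inconsistent, since \(\psi_\pm\) was chosen precisely to cancel it (and, being negative, it could anyway only help the subsolutions, never the supersolutions). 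Third, a smaller slip: the dominant new term produced by a correction \(\phi\) is \(\mp2\sqrt V\,\phi'\), so the sign of the perturbation must be coupled to the sign of the exponential; your uniform choice \(\underline W_\pm=W^0_\pm(1+\psi_\pm-\eta_\pm)\) with \(\eta_\pm\) decreasing gives the right sign for the growing pair but the wrong one for the decaying pair — compare the paper's choice \(\underline W_\pm=w_{\mp1,\pm}\), \(\overline W_\pm=w_{\pm1,\pm}\).

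It is worth noting that your extra machinery is aimed at the literal gap estimate \(\overline W_\pm=\underline W_\pm\bigl(1+O(\abs{x}^{-\beta-1})\bigr)\), and this is exactly where the need for control of \(V''\) creeps in. The paper's own construction, with correction factors \(1\mp r^{-\beta}\), in fact only yields a ratio \(1+O(\abs{x}^{-\beta})\), which is all that is used to reach the conclusion \(\mu+O(r^{-\beta})\) of Corollary~\ref{corollaryRefinedAsymptotics}. Replacing your two-step correction by the single explicit factor \(1-\tau r^{-\beta}\), with the sign of \(\tau\) tied to the sign of the exponential, repairs all three points at the cost of this weaker (but sufficient) comparison between \(\underline W_\pm\) and \(\overline W_\pm\).
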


\begin{proof}%
[Proof of Lemma~\ref{lemmaRoughUpperAndLowerSolutions}]
We define  the functions \(w_{\tau, +}\) and \(w_{\tau, -}\) for every \(\tau \in \R\) and \(r \in (0, +\infty)\) by
\[
  w_{\tau, \pm} (r) = \exp \Bigl(\pm \int_0^r \sqrt{V (s)} \dif s\Bigr){r^{-1/2} 
(\ln (r/R))^{-1/4}} \Bigl(1 - \frac{\tau}{r^\beta} \Bigr).
\]
We compute directly
\[
 w_{\tau, \pm}' (r) = \Bigl(\pm\sqrt{V (r)} - \frac{1}{2 r} - \frac{1}{4 r \ln r} + 
\frac{\beta \tau}{r^{\beta + 1} - \tau r}  \Bigr) w_{\tau, \pm} (r)
\]
and 
\[
\begin{split}
  w_{\tau, \pm}'' (r) = &\biggl( \Bigl(\pm\sqrt{V (r)} - \frac{1}{2 r} - \frac{1}{4 
r \ln r} + \frac{\beta \tau}{r^{\beta + 1} - \tau r}  \Bigr)^2\\
  & \pm \frac{V' (r)}{2 \sqrt{V (r)}} + \frac{1}{2 r^2} + \frac{1}{4 r^2 \ln 
r} + \frac{1}{4 r^2 (\ln r)^2} \\
&\hspace{5em}- \frac{\beta \tau \bigl((\beta + 1)r^\beta - 
\tau)}{(r^{\beta + 1} - \tau r)^2}\biggr) w_{\tau, \pm} (r).
\end{split}
\]
If we set \(W_{\tau, \pm} (x) = w_{\tau, \pm} (\abs{x})\), we have
\begin{multline*}
 -\Delta W_{\tau, \pm} (x) + V (x) W_{\tau, \pm} (x)\\
 =  \biggl(\pm\sqrt{V (r)}\Bigl(\frac{V' (r)}{2 V (r)} - \frac{1}{2 r \ln r} + 
\frac{2\beta \tau }{r^{\beta + 1} - \tau r}\Bigr) +  O 
\Bigl(\frac{1}{r^2}\Bigr)\biggr) W_{\tau, \pm} (x).
\end{multline*}
We therefore conclude by taking \(\underline{W}^\pm = w_{\mp 1, \pm}\) and \(\overline{W}^\pm
=w_{\pm 1, \pm}\).
\end{proof}

If \(V (x) = \Tilde{V} (x) \ln \abs{x}\), the assumption of Lemma \ref{lemmaRefinedUpperAndLowerSolutions} can be written
\[
 \frac{\Tilde{V}' (x)}{\Tilde{V} (x)} = o \Bigl(\frac{1}{\abs{x}^{\beta + 1}} 
\Bigr)
\]
as \(\abs{x} \to \infty\).
As we derived  Corollary~\ref{corollaryRoughAsymptotics} from Lemma~\ref{lemmaRoughUpperAndLowerSolutions}, we are able to improve the asymptotics 
of the solutions of linear equations thanks to Lemma~\ref{lemmaRefinedUpperAndLowerSolutions}.

\begin{corollary}
\label{corollaryRefinedAsymptotics}
Assume \(V \in C (\R^N)\) is such that for some \(\beta \in (0, 1]\),
\[
 \frac{V' (\abs{x})}{V (\abs{x})} = \frac{1}{\abs{x} \ln \abs{x}} + o 
\Bigl(\frac{1}{\abs{x}^{\beta + 1}}\Bigr)
\]
as \(\abs{x} \to \infty\). Let \(u\) be a positive radial solution of
\[
 -\Delta u + V u = 0.
\]
If
\[
  u (x) = o \left(\frac{\exp \Bigl(\int_0^{\abs{x}} \sqrt{V (s)} \dif s \Bigr)}{\abs{x}^{1/2} (\ln \abs{x})^{1/4}}\right),
\]
then there exists \(\mu \in (0, +\infty)\) such that, as \(\abs{x} \to \infty\),
\[
 u (x) = \frac{\bigl(\mu + O\bigl(r^{-\beta}\bigr)\bigr)\exp \Bigl(- \displaystyle \int_0^{\abs{x}} \sqrt{V (s)} \dif s\Bigr)}{\abs{x}^{1/2} (\ln \abs{x})^{1/4}}.
\]
\end{corollary}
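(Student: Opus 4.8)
The proof will follow the same pattern as the passage from Lemma~\ref{lemmaRoughUpperAndLowerSolutions} to Corollary~\ref{corollaryRoughAsymptotics}, but with the sharper barriers of Lemma~\ref{lemmaRefinedUpperAndLowerSolutions}: the plan is to trap $u$ between constant multiples of the \emph{decaying} barriers $\underline{W}_-$ and $\overline{W}_-$, and then to pin down the multiplicative constant by a monotonicity argument for the quotient of $u$ by a barrier. I would fix $R$ large enough that $V>0$ on $\R^2\setminus B_R$ (the hypothesis on $V'/V$ forces $V(r)\to+\infty$) and that Lemma~\ref{lemmaRefinedUpperAndLowerSolutions} applies; recall its barriers satisfy $\underline{W}_\pm(x),\overline{W}_\pm(x)=(1+O(\abs{x}^{-\beta}))\exp(\pm\int_R^{\abs{x}}\sqrt{V})/(\abs{x}^{1/2}(\ln\abs{x})^{1/4})$. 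Since $u$ is radial and $V$ continuous, $u\in C^2(\R^2\setminus\{0\})$.

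\emph{Upper bound.} First I would prove $u\le C\,\overline{W}_-$ on $\R^2\setminus B_R$ for some $C>0$. On $\partial B_R$ this holds for $C$ large, since $u$ is bounded there and $\overline{W}_->0$ is bounded below. The delicate point is the behaviour at infinity, and this is the only place where the hypothesis $u(x)=o\bigl(\exp(\int_0^{\abs{x}}\sqrt{V})/(\abs{x}^{1/2}(\ln\abs{x})^{1/4})\bigr)$ is used — without it the statement is false, because the equation has a solution comparable to the growing barrier. For each $\epsilon>0$ the function $C\overline{W}_-+\epsilon\overline{W}_+$ is a supersolution of $-\Delta+V$ (a sum of two supersolutions from the Lemma), it dominates $u$ on $\partial B_R$, and since $\overline{W}_+$ is comparable to $\exp(\int_R^{\abs{x}}\sqrt{V})/(\abs{x}^{1/2}(\ln\abs{x})^{1/4})$ while $u$ is $o$ of that quantity, it also dominates $u$ on $\partial B_{R_\epsilon}$ once $R_\epsilon$ is large. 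The weak maximum principle on the bounded annulus $B_{R_\epsilon}\setminus B_R$ gives $u\le C\overline{W}_-+\epsilon\overline{W}_+$ there, hence on $\R^2\setminus B_R$; letting $\epsilon\to0$ yields $u\le C\overline{W}_-$.

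\emph{Lower bound.} Next I would prove $u\ge c\,\underline{W}_-$ on $\R^2\setminus B_R$ for some $c>0$. Choose $c>0$ so small that $c\underline{W}_-\le u$ on $\partial B_R$. Then $z:=u-c\underline{W}_-$ is a supersolution of $-\Delta+V$ on $\R^2\setminus B_R$ (because $\underline{W}_-$ is a subsolution), $z\ge0$ on $\partial B_R$, and $z\to0$ at infinity (both terms are $O(\overline{W}_-)\to0$ by the upper bound). If $z$ were negative somewhere, its infimum would be a negative value attained at an interior point $x_0$, where $-\Delta z(x_0)\le0$ and $V(x_0)z(x_0)<0$, contradicting $-\Delta z+Vz\ge0$. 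Hence $z\ge0$. Together with the upper bound and the asymptotics of the barriers, this gives $u(x)\asymp\exp(-\int_R^{\abs{x}}\sqrt{V})/(\abs{x}^{1/2}(\ln\abs{x})^{1/4})$; in particular the radial quotient $q:=u/\overline{W}_-$ lies between two positive constants.

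\emph{Identification of the constant.} Writing the equation in the radial variable as $(ru')'=rVu$, and, with $\sigma:=-\Delta\overline{W}_-+V\overline{W}_-\ge0$ its supersolution defect, $(r\overline{W}_-')'=rV\overline{W}_--r\sigma$, a direct computation gives $(r\overline{W}_-^2 q')'=r\,u\,\sigma\ge0$, so $r\overline{W}_-^2 q'$ is nondecreasing. Since $q$ is bounded and $\int^\infty \dif r/(r\overline{W}_-^2)=+\infty$ (the barrier decays faster than any exponential), $r\overline{W}_-^2 q'$ cannot tend to a nonzero limit without forcing $q$ to $\pm\infty$; hence it increases to $0$, $q$ is eventually nonincreasing, and $q(r)\to\mu$ for some $\mu\in(0,+\infty)$. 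Finally, integrating $(r\overline{W}_-^2 q')'=ru\sigma$ from $r$ to $\infty$ and using the estimate $\sigma\asymp\sqrt{V}\,\abs{x}^{-\beta-1}\overline{W}_-$ implicit in the proof of Lemma~\ref{lemmaRefinedUpperAndLowerSolutions} together with $u\asymp\overline{W}_-$, a Laplace-type estimate of the resulting integral yields $q(r)=\mu+O(r^{-\beta})$. Combining with $\overline{W}_-(x)=(1+O(\abs{x}^{-\beta}))\exp(-\int_R^{\abs{x}}\sqrt{V})/(\abs{x}^{1/2}(\ln\abs{x})^{1/4})$ and absorbing the constant $\int_0^R\sqrt{V}$ (the difference of normalizations) into $\mu$ gives the asserted expansion. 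The main obstacle is the upper bound: converting the qualitative smallness hypothesis into the quantitative estimate $u\le C\overline{W}_-$ through the $\epsilon\overline{W}_+$-penalization and the passage $\epsilon\to0$ is the crux, while the remaining steps reduce to the maximum principle and a one-dimensional computation.
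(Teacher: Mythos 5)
Your argument is correct, but in its decisive step it follows a genuinely different route from the paper. The paper never isolates separate upper and lower bounds: it applies the same $\varepsilon$-penalization trick twice, once with the pair $(\underline{W}_-,\overline{W}_+)$ and once with $(\overline{W}_-,\underline{W}_+)$, to obtain for all $s\ge r\ge R$ the two-sided "monotonicity" $u(s)/\underline{W}_-(s)\ge u(r)/\underline{W}_-(r)$ and $u(s)/\overline{W}_-(s)\le u(r)/\overline{W}_-(r)$; since the lemma states $\overline{W}_-=\underline{W}_-\bigl(1+O(r^{-\beta-1})\bigr)$, this squeezes the quotient, the limit exists by the Cauchy criterion, and the $O(r^{-\beta})$ in the conclusion then comes only from the stated asymptotics of the barriers. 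You instead prove the two-sided bound $c\,\underline{W}_-\le u\le C\,\overline{W}_-$ (your upper bound is essentially the paper's penalization argument; your lower bound via an interior negative minimum is an extra, harmless step the paper does not need) and then identify the limit through the radial Wronskian identity $(r\overline{W}_-^2 q')'=r\,u\,\sigma\ge 0$ together with the divergence of $\int^\infty \dif r/(r\overline{W}_-^2)$, which is a clean and correct alternative; note, though, that your rate $q=\mu+O(r^{-\beta})$ requires the quantitative defect bound $\sigma\lesssim \sqrt{V}\,r^{-\beta-1}\overline{W}_-$, which is indeed available from the computation inside the proof of Lemma~\ref{lemmaRefinedUpperAndLowerSolutions} (with the choice $\tau=-1$) but is not part of its statement, whereas the paper's argument uses only the statement of the lemma — in particular the $1+O(r^{-\beta-1})$ comparison between $\underline{W}_-$ and $\overline{W}_-$, which your proof does not exploit. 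In exchange, your route yields the additional information that $u/\overline{W}_-$ is monotone, and it replaces the second penalization by an elementary ODE computation; both proofs otherwise rest on the same maximum-principle mechanism (with $V>0$ outside a large ball) and the same use of the growth hypothesis on $u$ to confine the region where the comparison could fail.
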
 
\begin{proof}
Let \(R > 0\), \(\underline{W}_\pm\) and \(\overline{W}_\pm\) be given by Lemma~\ref{lemmaRefinedUpperAndLowerSolutions}.
We now consider the function 
\[
  v_{\varepsilon, r}
  = (1 + \varepsilon) \frac{u (r)}{\underline{W}_- (r)} \underline{W}_- 
  - \varepsilon \frac{u (r)}{\overline{W}_+ (r)} \overline{W}_+ - u, 
\]
for every $r\ge R$.
By the growth assumptions on \(u\) and by the known growth of \(\overline{W}_-\) and \(\underline{W}_+\),
the set 
\[
  \Omega_{\varepsilon, r} = \bigl\{ x \in \R^2 \setminus B_{r} \st v_{\varepsilon, r} (x) > 0\bigr\}
\]
is bounded.
Moreover
\[
 -\Delta v_{\varepsilon, r} + V v_{\varepsilon, r} \le 0,
\]
and \(v_{\varepsilon,r} \le 0\) on \(\partial B_{r}\),
so that we infer from the weak maximum principle for second order linear operators on bounded sets that
\(
 v_{\varepsilon, r} \le 0
\)
in \(\Omega_{\varepsilon,r}\). Henceforth, we conclude that \(v_{\varepsilon,r} \le 0\) in \(\R^2 \setminus B_{r}\).
Since \(\varepsilon > 0\) is arbitrary, we have 
\[
 u (s) \ge \underline{W}_{-}(s) \frac{u (r)}{\underline{W}_- (r)}
\]
for all \(s \ge r \ge R\). Arguing similarly with the function 
\[
 w_{\varepsilon, r} 
  = (1 + \varepsilon) u - \frac{u (r)}{\overline{W}_- (r)} \overline{W}_- 
  - \varepsilon \frac{u (r)}{\underline{W}_+ (r)} \underline{W}_+,
\]
we deduce that 
\[
 u (s) \le \overline{W}_- (s) \frac{u (r)}{\overline{W}_- (r)},
\]
for all \(s \ge r \ge R\).
By the asymptotic equivalence of \(\underline{W}_-\) and \(\overline{W}_-\), we have thus proved
\[
  1
  \le 
  \frac{u (s) /\underline{W}_- (s)}{u (r) /\underline{W}_- (r)}
  \le 1 + O \Bigl(\frac{1}{r^{\beta + 1}}\Bigr),
\]
and it follows that the function \(u / \underline{W}_-\) has a limit at infinity
by the Cauchy criterion of convergence.
\end{proof}

\subsection{Asymptotics on the logarithmic potential}

In order to use our previous asymptotic estimates, we need to understand the logarithmic term \(\ln \ast \abs{u}^2\). A na\"ive approach gives the following inequality. 

\begin{proposition}
\label{propositionVDecay}
If \(f\) is radial and \(f\in L^1 (\R^2)\), then for each \(x \in \R^2\)
\[
 \Bigabs{\int_{\R^2} \ln \abs{x - y} \, f (y) \dif y - \ln \abs{x} \int_{\R^2} f (y)\dif y} 
 \le \int_{\R^2 \setminus B_{\abs{x}}} \ln \frac{\abs{y}}{\abs{x}} \abs{f (y)} \dif y.
\]
\end{proposition}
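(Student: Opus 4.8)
The plan is to exploit the radial symmetry of $f$ by passing to the Newtonian potential picture: the function
\[
 x \mapsto \frac{1}{2\pi}\int_{\R^2} \ln\frac{1}{\abs{x-y}}\,f(y)\dif y
\]
solves $-\Delta w = f$, and for a radial source the restriction of $w$ to the sphere $\partial B_{\abs{x}}$ is constant. The key classical fact to invoke is the \emph{Newton-type formula} for the logarithmic potential of a radial density: averaging $\ln\abs{x-y}$ over the sphere $\partial B_\rho$ (in the variable $y$, with $\abs{x}$ fixed) gives exactly $\ln\max\{\abs{x},\rho\}$. Concretely, for fixed $x\in\R^2$ and $\rho>0$,
\[
 \frac{1}{2\pi}\int_0^{2\pi} \ln\abs{x-\rho e^{i\theta}}\dif\theta = \ln\max\{\abs{x},\rho\}.
\]
This is the two-dimensional analogue of Newton's theorem and follows from the mean value property of the harmonic function $y\mapsto \ln\abs{x-y}$ on $\R^2\setminus\{x\}$ together with the explicit radial solution of $-\Delta w=\delta$ — I would either cite it or prove it in one line by noting both sides are continuous in $\rho$, agree as $\rho\to 0$, and the derivative identity $\frac{d}{d\rho}\big(\text{LHS}\big)=0$ for $\rho<\abs{x}$ resp. $=1/\rho$ for $\rho>\abs{x}$ comes from differentiating under the integral sign.

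Granting this, the first step is to write the integral in polar coordinates in $y$: with $f$ radial, $\int_{\R^2}\ln\abs{x-y}f(y)\dif y = 2\pi\int_0^\infty \Big(\tfrac{1}{2\pi}\int_0^{2\pi}\ln\abs{x-\rho e^{i\theta}}\dif\theta\Big) f(\rho)\,\rho\dif\rho = \int_{\R^2}\ln\max\{\abs{x},\abs{y}\}\,f(y)\dif y$. Hence the quantity inside the absolute value on the left-hand side becomes
\[
 \int_{\R^2}\bigl(\ln\max\{\abs{x},\abs{y}\} - \ln\abs{x}\bigr) f(y)\dif y
 = \int_{\R^2\setminus B_{\abs{x}}} \ln\frac{\abs{y}}{\abs{x}}\, f(y)\dif y,
\]
since the integrand vanishes identically on $B_{\abs{x}}$. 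Taking absolute values and moving them inside the integral gives the claimed bound, with equality of the two sides as an intermediate identity; the inequality in the statement is just the triangle inequality applied to this exact expression. I should also record that the right-hand side is finite whenever $f\in L^1(\R^2)$, because $0\le \ln(\abs{y}/\abs{x})\le \abs{y}/\abs{x}$ on $\R^2\setminus B_{\abs{x}}$ — actually even without that, $\ln(\abs{y}/\abs{x})$ is bounded by $\ln_+\abs{y}$ plus a constant depending on $\abs{x}$, so integrability is automatic; this also justifies the applications of Fubini/differentiation under the integral sign used above.

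The main obstacle, such as it is, is purely bookkeeping: justifying the interchange of the angular average with the radial integration and, if I prove the sphere-average identity directly, justifying differentiation under the integral sign near $\rho=\abs{x}$ where $\ln\abs{x-\rho e^{i\theta}}$ has an integrable logarithmic singularity at $\theta=0$. Both are handled by dominated convergence using the local integrability of $\log$ in the plane; there is no analytic difficulty, and the result is essentially a restatement of Newton's theorem. An alternative, equally short route avoiding any sphere-average computation: observe that $g(x):=\int_{\R^2}\ln\abs{x-y}f(y)\dif y - \ln\abs{x}\int_{\R^2}f$ is, outside the support issues, harmonic away from $\mathrm{supp} f\cup\{0\}$ and radial, hence of the form $c_1+c_2\ln\abs{x}$ on unbounded annuli; matching behaviour as $\abs{x}\to\infty$ forces $g(x)=-\int_{\R^2\setminus B_{\abs{x}}}\ln(\abs{x}/\abs{y})f$ up to the elementary computation of the constants — but the direct polar-coordinates argument above is cleaner and I would present that one.
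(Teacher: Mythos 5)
Your argument is correct and coincides with the paper's proof: both rest on Newton's shell theorem for the logarithmic kernel (the spherical average of \(\ln\abs{x-y}\) equals \(\ln\max\{\abs{x},\abs{y}\}\)), yielding the exact identity whose absolute value gives the stated bound. The paper simply cites this fact from Lieb--Loss rather than re-deriving the sphere-average formula, so the two proofs are essentially identical.
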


\begin{proof}
Since \(f\) is a radial function, we obtain by Newton's shell theorem, see for instance \cite{LiebLoss2001}*{Theorem 9.7}, for each \(x \in \R^2\),
\[
\begin{split}
 \int_{\R^2} \ln \abs{x - y} \, f (y) \dif y 
 & =\ln \abs{x} \int_{B_{\abs{x}}} f (y) \dif y + \int_{\R^2 \setminus B_{\abs{x}}} \ln \abs{y}\, f (y) \dif y\\
 &=\ln \abs{x} \int_{\R^2} f (y) \dif y 
 + \int_{\R^2 \setminus B_{\abs{x}}} \ln \frac{\abs{y}}{\abs{x}} f (y) \dif y.\qedhere
\end{split}
\]
\end{proof}

\subsection{Asymptotics for the groundstate}

We now go back to the logarithmic Choquard problem \eqref{prob2} for which we derive the sharp asymptotics at infinity of groundstates, that is, Theorem~\ref{theoremAsymptotics}.

\begin{proof}[Proof of Theorem~\ref{theoremAsymptotics}]
We first observe that since \(\lim_{\abs{x} \to \infty} (\frac{\ln}{2\pi} \ast \abs{u}^2) (x) = - \infty\), we have for each \(\lambda > 0\), 
\[
 -\Delta u + \lambda u \le 0,\quad x\in \R^2 \setminus B_R
\]
for some \(R > 0\) large enough depending on \(\lambda\).
It follows therefrom that 
\[
 \lim_{\abs{x} \to \infty} e^{\lambda \abs{x}} u (x) = 0
\]
for every \(\lambda > 0\). 
In view of Proposition~\ref{propositionVDecay}, this leads to 
\[
 \lim_{\abs{x} \to \infty} \Bigl(w (x) + M \ln \abs{x}\Bigr) e^{\lambda \abs{x}} = 0,
\]
where $w$ has been defined in \eqref{eq:w}
 and %the quantity \(M\) is defined by 
\[
 M = \frac{1}{2 \pi} \int_{\R^2} \abs{u}^2.
\]
This implies that, as \(r \to \infty\),
\begin{multline*}
 \int_{e^{-a/M}}^r \sqrt{a - w (s)} \dif s \\
 = \int_{e^{-a/M}}^r \sqrt{a + M \ln s} \dif s + \int_{e^{-a/M}}^\infty \left(\sqrt{a - w (s)} - \sqrt{a + M \ln (s)} \right)\dif s+ o (1),
\end{multline*}
where the second integral on the right-hand side is finite and does not depend on the variable \(r\).
We observe now by the elementary change of variable \(s = e^{-a/M} t\) that 
\[
\begin{split}
 \int_{e^{-a/M}}^r \sqrt{a + M \ln s} \dif s
 = \sqrt{M} e^{-a/M} \int_{1}^{e^{a/M} r} \sqrt{\ln t} \dif t.
\end{split}
\]
The asymptotic estimate now follows from the linear asymptotics of Corollary~\ref{corollaryRefinedAsymptotics}.
\end{proof}

\begin{remark}
\label{remarkSpecialFunctions}
The integral appearing in the statement of Theorem~\ref{theoremAsymptotics} can be expressed in terms of classical special functions. Indeed, by integration by parts and by a change of variable \(s = \exp (\sigma^2)\),
\[
\begin{split}
 \int_1^\lambda \sqrt{\ln s} \dif s 
 &= \lambda \sqrt{\ln \lambda} - \int_1^\lambda \frac{1}{2 \sqrt {\ln s}} \dif s = \lambda \sqrt{\ln \lambda} - \int_0^{\sqrt{\ln \lambda}} e^{\sigma^2} \dif \sigma\\
 & = \lambda \bigl(\sqrt{\ln \lambda} - F (\sqrt{\ln \lambda})\bigr) = \lambda \sqrt{\ln \lambda} - \frac{\sqrt{\pi}}{2} \operatorname{erfi} \bigl(\sqrt{\ln \lambda}\bigr),\\
 & = \lambda \sqrt{\ln \lambda} - \frac{\gamma \bigl( \tfrac{1}{2}, - \ln \lambda\bigr)}{2(-1)^{1/2}}  = \frac{\gamma \bigl(\frac{3}{2}, -\ln \lambda\bigr)}{(-1)^{3/2}}\\
 &= \sqrt{\ln \lambda}\, \Gamma (\tfrac{3}{2}) \gamma^* (\tfrac{3}{2},-\ln \lambda)\,.
\end{split}
\]
where \(F\) is \emph{Dawson's integral}, \(\operatorname{erfi}\) is the \emph{imaginary error function} (defined for \(z \in \C\) by \(\operatorname{erfi} (z) = - i\operatorname{erfi} (z i)\)) and \(\gamma\) is the \emph{lower incomplete gamma function} (using the same branch in its computation than for \((-1)^{1/2}\)) \cite{Temme2010} and \(\gamma^*\) is its entire part \cite{Paris2010}.
\end{remark}

\section{Nondegeneracy of the positive solutions}%
\label{sec:nondegeneracy}

Let \(u \in X\) be a solution of the planar logarithmic Choquard equation \eqref{prob2}
which does not change sign and satisfies the variational characterization
\[
  I(u) = \inf_{u \in X} \sup_{t \in \R} I(tu).
\]
Let \(w: \R^2 \to \R\) be the function defined for each \(x \in \R^2\) by
\[
 w (x) = \frac{1}{2\pi} \int_{\R^2} \ln \frac{1}{\abs{x - y}} \abs{u (y)}^2 \dif y .
\]
By Theorem~\ref{sec:symm-uniq-posit-11}, up to translation, this solution \(u\) is unique and radially symmetric.
We can therefore assume without loss of generality that the functions \(u\) and \(w\) are \emph{radially symmetric}.
We consider the linear operator \(\mathcal{L}(u)\) defined by 
\eqref{eq:L(u)}, that is, 
%\[
%\tilde X:= \Bigl\{\varphi \in H^2(\R^2; \C)\st  \int_{\R^2}\ln(1+\abs{x})\abs{\varphi(x)}^2\dif x < \infty\Bigr\}
%\]
%for each function \(w \in \Tilde{X}\) 
%by
\[
  \mathcal{L}(u) \varphi = -\Delta \varphi + (a - w) \varphi + 2 u \Bigl( \frac{\ln}{2\pi} \ast (u \varphi)\Bigr)
\]
on the space $\tilde X$ defined by \eqref{eq:tildeX}. In the sequel, we just write $\mathcal L$ to shorten the notation. 
%By standard arguments, one easily shows that the operator $\mathcal{L}(u)$ is a selfadjoint operator acting on \(L^2(\R^2)\) with domain \(\tilde X\). 

\subsection{Closedness of the operator $\mathcal{L}$}
We show that the operator \(\mathcal{L}\) is closed.

\begin{proposition}
\label{propositionClosedness}
The operator \(\mathcal{L}\) is a closed operator from \(L^2 (\R^2)\) to \(L^2 (\R^2)\).
\end{proposition}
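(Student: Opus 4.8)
The plan is to verify the abstract criterion for closedness directly: given a sequence $\varphi_n \in \tilde X$ with $\varphi_n \to \varphi$ in $L^2(\R^2)$ and $\mathcal L \varphi_n \to f$ in $L^2(\R^2)$, I would show $\varphi \in \tilde X$ and $\mathcal L \varphi = f$. The definition of $\tilde X$ in \eqref{eq:tildeX} is phrased in the weak (distributional) sense against test functions $\psi \in C^\infty_c(\R^2)$, so the natural strategy is to pass to the limit in the identity $\int_{\R^2} \varphi_n \, \mathcal L(u)\psi = \int_{\R^2} (\mathcal L\varphi_n)\,\psi$. Since the operator $\mathcal L(u)$ is formally self-adjoint, $\mathcal L(u)\psi \in C_c^\infty(\R^2)\cdot(\text{smooth})$ in fact lies in $L^2$ (here I would use that $w$ is continuous, indeed $w\in C(\R^2)$ since $u\in L^\infty$, and that $\frac{\ln}{2\pi}\ast(u\psi)$ is locally bounded), so the left-hand side converges to $\int_{\R^2}\varphi\,\mathcal L(u)\psi$ by $L^2$ convergence of $\varphi_n$, while the right-hand side converges to $\int_{\R^2} f\psi$. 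This already gives $\varphi\in\tilde X$ with $\mathcal L\varphi = f$ provided one knows $\varphi\in X$.

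Hence the crux is to promote $L^2$-convergence of $\varphi_n$ to membership of the limit in $X$ together with the full weak identity; equivalently, to obtain an a priori bound $\norm{\varphi_n}_X \le C(\norm{\varphi_n}_{L^2} + \norm{\mathcal L\varphi_n}_{L^2})$, or at least uniform boundedness of $\norm{\varphi_n}_X$ along the sequence. The gradient part is controlled by testing $\mathcal L\varphi_n = g_n$ against $\varphi_n$ itself (after a localization/cutoff argument to justify integrations by parts in $\tilde X$): this yields
\[
 \int_{\R^2} \abs{\nabla \varphi_n}^2 + a\abs{\varphi_n}^2
 = \int_{\R^2} g_n \varphi_n + \int_{\R^2} w\,\abs{\varphi_n}^2 - 2\int_{\R^2} u\varphi_n\Bigl(\tfrac{\ln}{2\pi}\ast(u\varphi_n)\Bigr).
\]
The first term is bounded by $\norm{g_n}_{L^2}\norm{\varphi_n}_{L^2}$. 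For the remaining two terms, I would split $w = w_+ - w_-$ (or use $\ln = \ln_+ - \ln_-$) exactly as in Section~\ref{sec:symm-uniq-posit}: the positive part $\ln_+\frac1{\abs{\cdot}}$ is handled by Young's inequality using $u\in L^p$ for all $p$, contributing an $L^p$-controlled term that can be absorbed or controlled since $u$ decays; the negative part $\ln_+\abs{x-y}\le\ln_+\abs x+\ln_+\abs y$ produces precisely a $\int_{\R^2}\ln_+\abs x\,\abs{\varphi_n}^2$ contribution with coefficient $M=\frac1{2\pi}\int u^2$, plus a term involving $\int\ln_+\abs x\,\abs u^2\abs{\varphi_n}$ that is finite because $u$ has super-exponential (hence super-logarithmic) decay. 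Collecting, one gets $\int\abs{\nabla\varphi_n}^2 + (a - \text{small})\int(1+\ln_+\abs x)\abs{\varphi_n}^2 \lesssim \norm{g_n}_{L^2}\norm{\varphi_n}_{L^2} + \text{l.o.t.}$, which with $a>0$ controls $\norm{\varphi_n}_X$ up to the known $L^2$ and $\mathcal L$-bounds.

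With $(\varphi_n)$ bounded in $X$, a subsequence converges weakly in $X$; its weak limit must coincide with the $L^2$-limit $\varphi$, so $\varphi\in X$, and then the weak-identity argument of the first paragraph closes the proof. The main obstacle I anticipate is technical rather than conceptual: justifying the integration by parts when testing $\mathcal L\varphi_n$ against $\varphi_n$ (elements of $\tilde X$ need not a priori have enough regularity or decay for $\int\nabla\varphi_n\cdot\nabla\varphi_n$ to make sense without a limiting/truncation argument), and carefully controlling the nonlocal quadratic form $\varphi\mapsto \int u\varphi\,(\tfrac{\ln}{2\pi}\ast(u\varphi))$ on $X$ — this is where the splitting of the logarithm and the rapid decay of $u$ from \eqref{eq:25} do the real work, mirroring the boundedness estimates for $B$ established in Section~\ref{sec:symm-uniq-posit}.
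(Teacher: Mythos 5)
Your proposal is correct and follows essentially the same route as the paper: an a priori bound on \(\norm{\varphi_n}_X\) obtained by testing \(\mathcal L\varphi_n\) against \(\varphi_n\), with the nonlocal interaction term controlled by splitting the logarithm near and far from the diagonal and using the decay of \(u\) (this is precisely the content of the paper's Lemma~\ref{lemmaInteractionTerm}), followed by weak convergence in \(X\) and passage to the limit against test functions. The only slip is cosmetic: the coefficient in front of the weight \(\ln_+\abs{x}\) comes from the asymptotics \(-w(x)\sim M\ln\abs{x}\) rather than from \(a\), and its positivity is not even essential since \(\norm{\varphi_n}_{L^2(\R^2)}\) is already bounded along the sequence.
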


The proof will rely on the following estimate.

\begin{lemma}
\label{lemmaInteractionTerm}
For each \(\varepsilon > 0\), there exists \(C_\varepsilon > 0\) such that for each \(\varphi, \psi \in H^1 (\R^2)\),
\begin{multline*}
 \Bigabs{\int_{\R^2} \int_{\R^2} \ln \abs{x - y} u (x) \varphi (x) u (y) \psi (y)\dif x\dif y}\\
 \le \varepsilon \Bigl(\int_{\R^2} \abs{\nabla \varphi}^2\Bigr)^\frac{1}{2}\Bigl(\int_{\R^2} \abs{\nabla \psi}^2\Bigr)^\frac{1}{2}
  + C_\varepsilon \Bigl(\int_{\R^2} \abs{\varphi}^2\Bigr)^\frac{1}{2}\Bigl(\int_{\R^2} \abs{\psi}^2\Bigr)^\frac{1}{2}.
\end{multline*}
\end{lemma}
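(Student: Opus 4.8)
The plan is to split the logarithmic kernel into its near-diagonal singular part and its far-field growing part, namely to write $\ln\abs{x-y} = \ln_+\abs{x-y} - \ln_+\frac{1}{\abs{x-y}}$, and to estimate the two resulting bilinear forms by different mechanisms. For the singular piece $\ln_+\frac{1}{\abs{x-y}}$, I would first localize: since $u \in X$ decays faster than any exponential by \eqref{eq:25}-type estimates (or more simply since $u \in L^\infty \cap L^1$ with rapid decay, as established in Section~\ref{sec:symm-uniq-posit}), the function $x \mapsto u(x)$ lies in $L^q(\R^2)$ for every $q \in [1,\infty]$. One then uses the Hardy--Littlewood--Sobolev / Young convolution inequality: for the kernel $k(z) = \ln_+\frac{1}{\abs{z}}$, which belongs to $L^r(\R^2)$ for every $r \in [1,\infty)$, one gets
\[
 \Bigabs{\int \int \ln_+\tfrac{1}{\abs{x-y}}\, u\varphi(x)\, u\psi(y) \dif x \dif y}
 \le \norm{k}_{L^r} \norm{u\varphi}_{L^s} \norm{u\psi}_{L^s}
\]
with $\tfrac{1}{r} + \tfrac{2}{s} = 2$; choosing $s$ slightly bigger than $1$ (so $r$ large but finite), and then $\norm{u\varphi}_{L^s} \le \norm{u}_{L^{2s/(2-s)}} \norm{\varphi}_{L^2}$ by Hölder, bounds this term by $C\norm{\varphi}_{L^2}\norm{\psi}_{L^2}$ — a contribution that goes entirely into the $C_\varepsilon$ term, with no gradient needed.

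The far-field piece $\ln_+\abs{x-y}$ is where the $\varepsilon$-smallness and the gradients enter. Using $\ln_+\abs{x-y} \le \ln_+\abs{x} + \ln_+\abs{y}$ (exactly the inequality already exploited in Section~\ref{sec:symm-uniq-posit} to bound $B^-$), the bilinear form splits into a sum of products, each of the form $\bigl(\int \ln_+\abs{x}\,\abs{u(x)}\,\abs{\varphi(x)}\dif x\bigr)\cdot\bigl(\int \abs{u(y)}\,\abs{\psi(y)}\dif y\bigr)$ and its mirror. The second factor is harmless ($\le \norm{u}_{L^2}\norm{\psi}_{L^2}$). For the first factor, the key point is that the weight $\ln_+\abs{x}$ grows, but the solution $u$ decays faster than exponentially, so $\ln_+\abs{x}\,\abs{u(x)}$ still lies in $L^2(\R^2)$; thus $\int \ln_+\abs{x}\abs{u}\abs{\varphi} \le \norm{\ln_+\abs{\cdot}\,u}_{L^2}\norm{\varphi}_{L^2}$. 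Naively this again produces only a $C\norm{\varphi}_{L^2}\norm{\psi}_{L^2}$ bound with no small constant. To extract the $\varepsilon$, I would instead localize the growing weight to a large ball: fix $R$ and write $\ln_+\abs{x}\abs{u(x)} = \ln_+\abs{x}\abs{u(x)}\mathbf{1}_{B_R} + \ln_+\abs{x}\abs{u(x)}\mathbf{1}_{\R^2\setminus B_R}$. On $\R^2\setminus B_R$ the rapid decay of $u$ makes $\norm{\ln_+\abs{\cdot}\,u}_{L^2(\R^2\setminus B_R)}$ as small as we wish by taking $R$ large — this piece is absorbed into $\varepsilon\norm{\varphi}_{L^2}\norm{\psi}_{L^2}$, hence a fortiori into the $\varepsilon$ gradient term. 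On the compact ball $B_R$, the weight $\ln_+\abs{x}$ is bounded by $\ln R$, so $\int_{B_R}\ln_+\abs{x}\abs{u}\abs{\varphi} \le (\ln R)\norm{u}_{L^2(B_R)}\norm{\varphi}_{L^2}$, contributing to $C_\varepsilon\norm{\varphi}_{L^2}\norm{\psi}_{L^2}$ with a constant depending on $R = R(\varepsilon)$. No gradients are actually consumed in this route.

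I anticipate the main subtlety is precisely deciding \emph{whether} one needs the gradient term at all: the statement allows an $\varepsilon$ multiple of $\norm{\nabla\varphi}_{L^2}\norm{\nabla\psi}_{L^2}$, but the argument above, exploiting the super-exponential decay of $u$, appears to give the stronger conclusion with $\varepsilon\norm{\varphi}_{L^2}\norm{\psi}_{L^2}$ in its place. If one wished to keep the lemma self-contained using only $u \in X$ (mere finiteness of $\int(1+\ln_+\abs{x})\abs{u}^2$ and $u \in H^1 \cap L^{2p}$) without invoking the a priori decay, then the splitting of the singular kernel would instead be handled via the endpoint: bound $\ln_+\frac{1}{\abs{x-y}}$ by $C_\delta\abs{x-y}^{-\delta}$ for small $\delta>0$, apply the Hardy--Littlewood--Sobolev inequality to land in $L^{2/(1+\delta')}$, and then interpolate $\norm{u\varphi}$ between $\norm{u}_{L^\infty}\norm{\varphi}_{L^2}$-type and $\norm{u}_{L^{2p}}\norm{\varphi}_{L^{2p'}}$-type bounds, finally invoking the Gagliardo--Nirenberg inequality $\norm{\varphi}_{L^{2p'}}^2 \lesssim \norm{\varphi}_{L^2}\norm{\nabla\varphi}_{L^2}$ in dimension two. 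It is \emph{this} last route that genuinely uses $\norm{\nabla\varphi}_{L^2}$, with the $\varepsilon$ coming from Young's inequality $ab \le \tfrac{\varepsilon}{2}a^2 + \tfrac{1}{2\varepsilon}b^2$ applied after splitting the $L^2$ and gradient norms. Either way, the logical heart is the same two-scale decomposition of $\ln\abs{x-y}$ into a uniformly integrable singular part and a linearly-growing-in-$\ln\abs{x}$ part; the rest is the bookkeeping of which norm each piece feeds.
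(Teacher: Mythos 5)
Your proof is correct in substance but follows a genuinely different route from the paper's, and in fact proves more. The paper splits the double integral according to \(\abs{x-y}\le\delta\) versus \(\abs{x-y}>\delta\) with \(\delta\) chosen in terms of \(\varepsilon\): on the near-diagonal set it combines the Young convolution inequality with the Sobolev embedding \(H^1(\R^2)\hookrightarrow L^{q}(\R^2)\), so the gradient norms appear and the factor \(\varepsilon\) comes from the smallness of the \(L^{p/(2(p-1))}(B_\delta)\)-norm of \(\ln\abs{z}\) as \(\delta\to0\); on the complementary set it uses \(\ln(\abs{x-y}/\delta)\le\ln_+(\abs{x}/\delta)+\ln_+(\abs{y}/\delta)\) (up to an additive \(\ln 2\)) together with the decay of \(u\), producing the \(C_\varepsilon\norm{\varphi}_{L^2}\norm{\psi}_{L^2}\) term. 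You instead split the kernel at \(\abs{x-y}=1\) and never touch the gradients: the singular piece is handled by Young's inequality with \(\ln_+\tfrac{1}{\abs{\cdot}}\in L^r(\R^2)\) for every finite \(r\) and \(u\in L^q(\R^2)\) for some \(q>2\), the far piece by \(\ln_+\abs{\cdot}\,u\in L^2(\R^2)\). This yields the stronger conclusion that the bilinear form is bounded on \(L^2(\R^2)\times L^2(\R^2)\) --- indeed the kernel \(u(x)\ln\abs{x-y}\,u(y)\) is Hilbert--Schmidt thanks to the decay of \(u\) --- which implies the lemma with the gradient coefficient taken equal to zero and amply suffices for its only use, in the proof of Proposition~\ref{propositionClosedness}. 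Both arguments rely on the boundedness and decay of \(u\); yours simply trades the Sobolev embedding for the extra integrability \(u\in L^q\), \(q>2\). Two minor points to clean up: the inequality \(\ln_+(a+b)\le\ln_+a+\ln_+b\) is false as stated (take \(a=b=1\)); the correct form carries an extra \(\ln 2\), which is harmless (the paper makes the same slip). Also, a term of size \(\varepsilon\norm{\varphi}_{L^2}\norm{\psi}_{L^2}\) cannot be absorbed ``a fortiori'' into \(\varepsilon\norm{\nabla\varphi}_{L^2}\norm{\nabla\psi}_{L^2}\), since there is no Poincar\'e inequality on \(\R^2\); but no such absorption is needed --- that term simply goes into \(C_\varepsilon\norm{\varphi}_{L^2}\norm{\psi}_{L^2}\), and your \(B_R\)-localization step is superfluous because the lemma imposes no smallness on that coefficient.
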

\begin{proof}
Let \(\delta > 0\), and let \(A_\delta = \{(x, y) \in \R^2 \times \R^2 \st \abs{x - y} \le \delta \}\).
Using the Young convolution inequality and the Sobolev inequality for some fixed \(p \in (2, +\infty)\), we have then
\begin{multline*}
 \Bigabs{\int_{A_\delta} \ln \abs{x - y} u (x) \varphi (x) u (y) \psi (y)\dif x\dif y}\\
 \le \Bigl(\int_{|z|\le\delta } \abs{\ln \abs{z}}^{\frac{p}{2 (p - 1)}} \Bigr)^{2 - \frac{2}{p}}
 \Bigl(\int_{\R^2} \abs{u}^p \abs{\varphi}^p \Bigr)^\frac{1}{2}\Bigl(\int_{\R^2} \abs{u}^p \abs{\psi}^p \Bigr)^\frac{1}{2}\\
 \le \varepsilon \Bigl(\int_{\R^2} \abs{\nabla \varphi}^2 + \abs{\varphi}^2 \Bigr)^\frac{1}{2}\Bigl(\int_{\R^2} \abs{\nabla \psi}^2 + \abs{\psi}^2 \Bigr)^\frac{1}{2},
\end{multline*}
provided that \(\delta>0\) is sufficiently small.
On the other hand, we observe that if \(\abs{x - y}\ge \delta\), \(\abs{x} \ge \delta\) and \(\abs{y} \ge \delta\), then 
\[
 0\le \ln \frac{\abs{x - y}}{\delta} \le \ln_+ \frac{\abs{x}}{\delta} + \ln_+ \frac{\abs{y}}{\delta}
\]
so that, because of the exponential decay of \(u\) (Theorem~\ref{theoremAsymptotics}),
\begin{multline*}
 \Bigabs{\int_{\R^2 \times \R^2 \setminus A_\delta} \ln \abs{x - y} u (x) \varphi (x) u (y) \psi (y)\dif x\dif y}\\
 \le \int_{\R^2 \times \R^2} \Bigl(\abs{\ln \delta} + \ln_+ \frac{\abs{x}}{\delta} + \ln_+ \frac{\abs{y}}{\delta}\Bigr)\abs{u (x)}\,\abs{u (y)}\, \abs{\varphi (x)}\,\abs{\psi (y)}\dif x \dif y\\
 \le C \Bigl(\int_{\R^2} \abs{\varphi}^2\Bigr)^\frac{1}{2}\Bigl(\int_{\R^2} \abs{\psi}^2\Bigr)^\frac{1}{2}.\qedhere
\end{multline*}
\end{proof}

\begin{proof}%
[Proof of Proposition~\ref{propositionClosedness}]%
Let \((\varphi)_{n \in \N}\) be a sequence in \(\Tilde{X}\) that converges strongly in \(L^2 (\R^2)\) to \(\varphi \in L^2 (\R^2)\) and such that \((\mathcal{L}\varphi_n)_{n \in \N}\) converges strongly in \(L^2 (\R^2)\) to some \(f \in L^2 (\R^2)\).
We observe that, in view of Lemma~\ref{lemmaInteractionTerm}, for each \(n \in \N\),
\[
 \int_{\R^2} \varphi_n \mathcal{L} \varphi_n
 \ge \frac{1}{2} \int_{\R^2} \abs{\nabla \varphi}^2 + \int_{\R^2} (- w) \abs{\varphi}^2 - C \int_{\R^2}  \abs{\varphi}^2.
\]
By the convergences of the sequences \((\varphi_n)_{n \in \N}\) and \((\mathcal{L} \varphi_n)_{n \in \N}\) and by the asymptotic behaviour of the function \(w\), it follows that the sequence \((\varphi_n)_{n \in \N}\) is bounded in \(X\), and thus it converges weakly to \(\varphi\) in the space \(X\).

If we now take \(\psi \in C^1_c (\R^2)\), we have 
\[
 \int_{\R^2} \psi f = \lim_{n \to \infty} \int_{\R^2} \psi \mathcal{L} \varphi_n
 = \lim_{n \to \infty} \int_{\R^2} \varphi_n \mathcal{L} \psi 
 = \int_{\R^2} \varphi \mathcal{L} \psi.
\]
Since \(\varphi \in X\), we have by the H\"older inequality and by the exponential decay of \(u\) we have for \(x \in \R^2\) large enough, \(\abs{\ln \ast (u \varphi) (x)} \le C \ln \abs{x}\). In particular \(u \,(\ln \ast (u\varphi)) \in L^2 (\R^2)\). 
\end{proof}

\subsection{Angular splitting of the operator $\mathcal{L}$}

Since the solution \(u\) is radial, 
the operator \(\mathcal{L}\) commutes with rotations acting on \(L^2 (\R^2)\).
This suggests to use the orthogonal splitting \cite{SteinWeiss1971}*{\S IV.2}
\[
 L^2 (\R^2; \C) = \bigoplus_{k \in \Z} L^2_k (\R^2; \C),
\]
where the summands 
\begin{multline*}
 L^2_k (\R^2; \C) = \bigl\{ f \in L^2 (\R^2;\C) \st \text{for almost every \(z \in \R^2 \simeq \C\)
  and \(\theta \in \R\), }\\
  f (e^{i \theta} z) = e^{i k \theta} f (z) \bigr\},
\end{multline*}
are closed mutually orthogonal subspaces of \(L^2 (\R^2; \C)\).
In particular \(L^2_0 (\R^2; \C)\) is the subspace of radial functions of \(L^2 (\R^2; \C)\).
In general, if \(f \in L^2_k (\R^2; \C)\), then there exists 
a function \(g : (0, +\infty) \to \C\) 
such that 
\[
 \int_0^\infty \abs{g (r)}^2 r \dif r < \infty
\]
and for each \(r \in (0, +\infty)\) and \(\theta \in \R\)
\[
 f (r e^{i\theta}) = g (r) e^{i k \theta}.
\]
In order to describe how the linear operator \(\mathcal{L}\) acts on each of the subspaces \(L^2_k (\R^2; \C)\),
we rely on the multipole expansion of the logarithm kernel \cite{SteinWeiss1971}*{\S IV.5.7}: 
for each \(x = r e^{i \theta} \in \R^2 \simeq \C\) and \(y = s e^{i \eta} \in \R^2 \simeq \C\), 
if \(r > s\), then 
\begin{equation}
\label{eqMultipoleExpansion}
 \ln \abs{x - y}
 =\ln r + \sum_{k = 1}^\infty \Bigl(\frac{s}{r}\Bigr)^k \frac{\cos (k (\theta - \eta))}{k}.
\end{equation}
This identity is related to the generating function of the Chebyshev polynomials 
and is also known as the cylindrical multipole expansion.
The formula \eqref{eqMultipoleExpansion} follows directly from the convergence 
of the Taylor series of the complex logarithm, that is, 
\[
 \ln \abs{x - y} = \ln r + \operatorname{Re} \Bigl(\ln \Bigl(1 - \frac{s}{r} e^{i (\eta - \theta)}\Bigr)\Bigr)
 = \ln r + \sum_{k = 1}^\infty  \Bigl(\frac{s}{r}\Bigr)^k \frac{\cos (k (\theta - \eta))}{k} .
\]
The corresponding multipole expansion of the Newtonian kernel was used
in the proof of the nondegeneracy of the groundstate solution for the three-dimensional
Choquard equation \cite{Lenzmann2009}.

\smallbreak

If \(\varphi \in \Tilde{X} \cap L^2_k (\R^2; \C)\), then we can write \(\varphi (r e^{i \theta})
= \psi (r) e^{i k \theta}\) for some \(\psi : (0, +\infty) \to \C\), and 
\[
 (\mathcal{L} \varphi) (r e^{i \theta}) = 
 (\mathcal{L}_k \psi) (r) e^{i k \theta},
\]
where for each \(k \in \Z \setminus \{0\}\), the operator \(\mathcal{L}_k\) is defined by
\begin{multline*}
 \mathcal{L}_k \psi (r)
 = -\psi'' (r) - \frac{1}{r} \psi' (r)
 + \Bigl( a + \frac{k^2}{r^2} - w (r)\Bigr)\psi (r) \\
  - \frac{u (r)}{\abs{k}}  \int_0^\infty  \psi (s)  u(s) \Bigl(\frac{\min (r, s)}{\max (r, s)} \Bigr)^k 
  s \dif s,
\end{multline*}
whereas for \(k = 0\), the operator \(\mathcal{L}_0\) is defined by
\begin{multline*}
 \mathcal{L}_0 \psi (r)
 = -\psi'' (r) - \frac{1}{r} \psi' (r)
  + \bigl(a - w (r)\bigr) \psi (r) \\
  - 2 u (r) \int_0^\infty  
  \psi (s) u(s)  \ln \frac{1}{\min (r, s)} s \dif s.
\end{multline*}
This last formula can also be obtained by Newton's shell theorem. We also have
\[
  w (r) = \int_0^\infty  \abs{u(s)}^2  s \ln \frac{1}{\max (r, s)} \dif s.
\]
Observe in particular that if \(\varphi \in \Tilde{X} \cap L^2_k (\R^2; \C)\), 
we have \(\mathcal{L} \varphi \in L^2_k (\R^2; \C)\) and therefore 
\[
 \ker \mathcal{L} = \bigoplus_{k \in \Z} \bigl(\ker \mathcal{L} \cap L^2_k (\R^2; \C)\bigr).
\]
This allows to study separately the kernels of the operators \(\mathcal{L}_k\) which is our aim in the next subsections.

\subsection{Radial eigenfunctions}
We show that the kernel of the operator \(\mathcal{L}_0\) is trivial.
As in \cite{Lenzmann2009}, we first decompose the operator \(\mathcal{L}_0\) as follows
\[
 \mathcal{L}_0 \psi (r)
 = \Hat{\mathcal{L}}_0 \psi (r) + 2 u (r) \int_0^\infty  u (s) \psi (s)\, s \ln s\dif s,
\]
where the operator \(\Hat{\mathcal{L}}_0\) is defined by
\begin{equation}
\label{eqLHat}
  \Hat{\mathcal{L}}_0 \psi (r)
  = -\psi'' (r) - \frac{1}{r} \psi' (r) + \bigl(a - w (r)\bigr)\psi (r) 
  + 2 u (r) \int_0^r u (s) \psi (s)\, s \ln \frac{r}{s} \dif s
\end{equation}
and we prove the exponential growth of solutions \(v\) to the linear equation 
\(\Hat{\mathcal L}_0 v =0\).

\begin{lemma}
\label{decay}
If \(\psi \in C^2 ([0, +\infty);\C)\) satisfies \(\Hat{\mathcal{L}}_0 \psi = 0\), then, either 
\(\psi = 0\) or there exists \(C_1, C_2 > 0\) such that 
\[
  \abs{\psi (r)} \ge C_1 u (r) \exp \Bigl(C_2 \int_1^r \frac{1}{s \abs{u (s)}^2} \dif s\Bigr),
\]
for each \(r \ge 1\). In this last case, we have \(\lim_{r \to \infty} \abs{\psi (r)} = \infty\).
\end{lemma}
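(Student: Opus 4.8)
The plan is to observe that $\Hat{\mathcal L}_0$ is, up to the Hardy-type nonlocal perturbation, a radial Schr\"odinger operator with potential $a - w(r) \sim a + M\ln r$, and to exploit the fact that the nonlocal term $2u(r)\int_0^r u\psi\, s\ln(r/s)\dif s$ only involves values of $\psi$ on $[0,r]$, so that the equation $\Hat{\mathcal L}_0\psi = 0$ can be treated as an ODE with a ``memory'' forcing term. First I would recast $\Hat{\mathcal L}_0\psi = 0$ by introducing the auxiliary function $g(r) := 2u(r)\int_0^r u(s)\psi(s)\, s\ln\frac{r}{s}\dif s$, which satisfies the pair of first-order relations $g(r)/u(r) \mapsto$ something whose derivative in $r$ is $2\int_0^r u\psi\, s\,\tfrac{1}{s}\cdot s\dif s\cdot(\text{stuff})$ — more precisely, $\bigl(g/u\bigr)$ has the property that differentiating the integral $\int_0^r u\psi\, s\ln(r/s)\dif s$ twice in $r$ returns $u(r)\psi(r)\cdot r \cdot \tfrac1r = u(r)\psi(r)$ (after one differentiation one gets $\int_0^r u\psi\, s\,\tfrac1r\dif s$, i.e. $\tfrac1r\int_0^r u\psi\, s\dif s$). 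So the system closes: writing $h(r) := \int_0^r u(s)\psi(s)\, s\dif s$, the equation becomes a second-order ODE for $\psi$ coupled to $h' = u\psi\, r$, i.e. genuinely a third-order linear ODE system, or equivalently a closed linear system in $(\psi,\psi',h)$.

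The core mechanism is the following comparison with $u$ itself. Since $u$ solves $-u'' - \tfrac1r u' + (a - w)u + (\text{its own nonlocal term}) = 0$ — indeed $u$ is, up to normalisation, annihilated by a closely related operator — the Wronskian-type quantity $W(r) := r\bigl(\psi'(r) u(r) - \psi(r) u'(r)\bigr)$ will have a derivative that is controlled by the nonlocal term only. Concretely, multiplying $\Hat{\mathcal L}_0\psi = 0$ by $ru$ and the $u$-equation by $r\psi$ and subtracting, the local second-order parts combine into $-\tfrac{d}{dr}\bigl(r(\psi' u - \psi u')\bigr)$, while the potential terms $(a-w)$ cancel, leaving $\tfrac{d}{dr}W(r) = r u(r)\bigl[2u(r)\int_0^r u\psi\, s\ln\tfrac{r}{s}\dif s\bigr] - r\psi(r)\bigl[\text{$u$'s nonlocal term}\bigr]$, which after integration by parts in the inner integrals reduces to an expression of the form $\tfrac{d}{dr}W(r) = (\text{explicit coefficient})\cdot\bigl(\int_0^r u\psi\, s\dif s\bigr)^2 / (\text{something}) \ge 0$, or at any rate of one sign for $r$ large. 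This forces $W$ to be eventually monotone; if $\psi \not\equiv 0$ then $W$ cannot be identically zero (two linearly independent radial solutions), so $W(r) \to \ell \neq 0$ or $W$ is bounded away from $0$ and of fixed sign for large $r$. Dividing by $ru^2$ and integrating then gives $\bigl(\psi/u\bigr)'(r) = W(r)/(r u(r)^2)$, whence
\[
 \frac{\psi(r)}{u(r)} = \frac{\psi(r_0)}{u(r_0)} + \int_{r_0}^r \frac{W(t)}{t\, u(t)^2}\dif t,
\]
and since $W$ is eventually of fixed sign and bounded below in absolute value while $u(t)^{-2}$ grows super-exponentially (by the sharp decay of Theorem~\ref{theoremAsymptotics}), the integral dominates, yielding $\abs{\psi(r)} \ge C_1 u(r)\exp\bigl(C_2\int_1^r \tfrac{1}{s\,\abs{u(s)}^2}\dif s\bigr)$ for $r\ge 1$ after absorbing constants; the final claim $\abs{\psi(r)}\to\infty$ is then immediate because $\int_1^\infty \tfrac{1}{s u(s)^2}\dif s = +\infty$ and $u(r)\exp(C_2\int_1^r\cdots)$ blows up (the exponential of the $\ln$-potential integral beats the decay of $u$).

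The main obstacle I anticipate is twofold. First, one must verify carefully that the nonlocal cross-terms in $\tfrac{d}{dr}W$ genuinely have a sign (or are of lower order than the growth they need to drive), which requires exploiting the precise structure of the kernels $\ln\tfrac{r}{s}$ — here the identity $\partial_r\bigl[s\ln\tfrac rs\bigr] = \tfrac sr$ and Newton's shell theorem for $w$ are the key simplifications, and an integration by parts turning $\int_0^r u\psi\, s\ln\tfrac rs\dif s$ into $\int_0^r \tfrac1t\bigl(\int_0^t u\psi\, s\dif s\bigr)\dif t$ should make the sign transparent. Second, one must handle the possibility that $W$ oscillates or decays before eventually stabilising: this is where one invokes that $u$ is a \emph{positive} solution and that, near the behaviour at infinity, the operator $\Hat{\mathcal L}_0$ is a small perturbation of a nonoscillatory second-order operator (potential $\to +\infty$), so solutions are nonoscillatory and the standard theory of disconjugate equations applies — any nontrivial solution not proportional to the ``recessive'' one (which is $u$ itself, or rather the solution decaying like $u$) must be ``dominant'' and grows at the stated rate. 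Pinning down that $u$ is indeed the recessive solution of the \emph{homogeneous} equation associated to $\Hat{\mathcal L}_0$ (as opposed to the full $\mathcal L_0$) is the delicate point, and is exactly why the nonlocal term of $\Hat{\mathcal L}_0$ only integrates over $[0,r]$: it makes $u$ an honest solution of $\Hat{\mathcal L}_0 v = c\, u(r)$ for the constant $c = 2\int_0^\infty u^2 s\ln s\dif s$ coming from the discarded piece, which one then corrects for.
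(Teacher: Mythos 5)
Your starting point coincides with the paper's: since the convolution is already contained in $w$, the function $u$ solves the purely local radial ODE $-u''-\tfrac1r u'+(a-w)u=0$, and combining this with $\Hat{\mathcal{L}}_0\psi=0$ gives the Wronskian identity $\tfrac{d}{dr}\bigl(r(u\psi'-u'\psi)\bigr)=2r\abs{u(r)}^2\int_0^r u(s)\psi(s)\,s\ln\tfrac rs\dif s$ (in particular there is no ``$u$'s nonlocal term'' to subtract). From there, however, your plan has genuine gaps. First, the right-hand side is \emph{linear} in $\psi$, so it is not a square and has no a priori sign; the claimed eventual monotonicity of $W(r)=r(u\psi'-u'\psi)$ is unsupported. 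The fact you invoke to repair this is false: $\Hat{\mathcal{L}}_0 u(r)=2u(r)\int_0^r\abs{u(s)}^2 s\ln\tfrac rs\dif s$, which is $u(r)$ times an unbounded, $r$-dependent factor (of order $\ln r$), not $c\,u(r)$ for a constant $c$; and the recessive/dominant (disconjugacy) dichotomy you appeal to is a theory for local second-order ODEs, not available as stated for this operator with memory.

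Second, even granting that $W$ were eventually of fixed sign and bounded away from zero, integrating $(\psi/u)'=W/(r u^2)$ only yields $\abs{\psi(r)}\ge C\,u(r)\int_1^r\tfrac{\dif t}{t\abs{u(t)}^2}$, which is exponentially weaker than the bound in the statement; no ``absorption of constants'' turns $\int_1^r\tfrac{\dif t}{t\abs{u(t)}^2}$ into $\exp\bigl(C_2\int_1^r\tfrac{\dif t}{t\abs{u(t)}^2}\bigr)$. The exponential comes from a Gronwall--Volterra mechanism that your plan discards: since $C^2$ regularity at the origin forces $r(u\psi'-u'\psi)\to0$ as $r\to0$, the paper integrates the Wronskian identity \emph{twice} and exchanges the order of integration, obtaining for $\eta=\psi/u$ an equation $\eta(r)=\eta(0)+\int_0^r\eta(s)\,b(s)\dif s$ with an explicit nonnegative kernel; the growth of $\eta$ then feeds back into itself, giving $\abs{\eta(r)}\ge\abs{\eta(0)}\exp\bigl(\int_0^r b\bigr)$, and the kernel integral is bounded below by $c\int_1^r\tfrac{\dif q}{q\abs{u(q)}^2}$ for $r\ge1$. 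This same structure produces the dichotomy for free ($\eta(0)=0$ forces $\eta\equiv0$, hence $\psi\equiv0$), whereas your argument never rules out that a nontrivial $\psi$ has $W$ oscillating or tending to $0$. The missing idea, then, is to exploit the positive-kernel Volterra equation obtained by double integration rather than trying to freeze the Wronskian at a nonzero constant.
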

  
\begin{proof}
% We first observe that since \(\psi\) is of class \(C^2\) up to \(0\) and since
% \(\Hat{\mathcal{L}} \psi = 0\), we have 
% \(\psi' (0) = 0\).

% Let us first assume that 
% \(\psi (0) = 0\).
% On has then 
% \[
% \begin{split}
%  w (r) &= \int_{0}^r  \Bigl(\psi'' (s) + \frac{1}{s} \psi' (s)\Bigr) s \ln \frac{r}{s} \dif s \\
%  & = \int_{0}^r \Bigl((a - v (s)) \psi (s) + 2 u (s) \int_0^s \psi (t) u (t) t \ln \frac{s}{t} \dif t\Bigr)s \ln \frac{r}{s} \dif s \\
%  & = \int_{0}^r \psi (s) \Bigl((a - v(s)) s \ln \frac{r}{s} 
%   + 2 u (s) \int_{s}^r  s \ln \frac{t}{s}\, u (t)\, t \ln \frac{r}{t} \dif t\Bigr) v (s) \dif s\ .
% \end{split}
% \]
% Since \(u\) and \(s \in (0, 1) \mapsto s \ln s\) are continuous,
% for each \(R > 0\), there exists \(C > 0\) such that for each \(r \in [0, R]\),
% \[
%  \abs{\psi (r)} \le C \int_0^r \abs{\psi (s)} \dif s,
% \]
% it follows then from the classical Gr\"onwall inequality that \(\psi = 0\) on \([0, +\infty)\).
% 
% We assume now that \(\psi (0) \ne 0\).
% 
% 
% % We can assume without loss of generality that $v(0)>0$. Moreover it is convenient to suppose that 
% % $v(0) > U(0)$ holds. Next we write ${\mathcal L}_+ v=0$ as
% % $$
% % v''(r) + \frac{1}{r} v'(r)= V(r) v(r) + W(r)$$
% % where $V(r)= (\ln\abs{x} \ast \abs{u}^2)(r) -a$ and
% % $W(r)= 2 U \int_0^r K(r,s) U(s) v(s) ds$.
As \(u\) is a solution of \eqref{prob2}, it satisfies for each \(r \in (0, +\infty)\),
\begin{equation}
\label{eqUradial}
-u''(r) - \frac{1}{r} u'(r) + \bigl(a - w (r)\bigr) u(r) = 0.
\end{equation}
Since $\Hat{\mathcal L}_0 \psi =0$, it follows from \eqref{eqUradial} that for every \(r \in (0, +\infty)\),
\[
\begin{split}
 \frac{d}{d r} \bigl(r (u (r) \psi' (r) &- u' (r) \psi (r))\bigr)\\
 &= r \Bigl( u (r) \psi'' (r) + \frac{1}{r} u (r) \psi' (r) - u'' (r) \psi (r) - \frac{1}{r} u' (r) \psi (r)\Bigr)\\
 &= 2 r \abs{u (r)}^2 \int_0^r \psi (s) u (s)\, s \ln \frac{r}{s} \dif s.
\end{split}
\]
Integrating, this implies that if \(\eta = \psi /u\),
\[
\begin{split}
 \eta' (r)&=
 \frac{r \bigl(u (r) \psi' (r) - \psi (r) u' (r)\bigr)}{r \abs{u (r)}^2}\\
 &= 2 \int_0^r \frac{s \abs{u (s)}^2} {r \abs{u (r)}^2}\int_0^s \eta (t) \abs{u (t)}^2 \, t \ln \frac{s}{t}\dif t \dif s.
\end{split}
\]
Integrating again, we finally obtain, by exchanging the order of integration
\begin{equation}
\label{eqIntegralEqeta}
\begin{split}
  \eta (r) &=
  \eta (0)
  + 2\int_0^r 
  \int_0^s \frac{t \abs{u (t)}^2}{s \abs{u (s)}^2} \int_0^t \eta (q) \,q \abs{u (q)}^2 \ln \frac{t}{q}\dif q \dif t \dif s\\
  &= \eta (0)
  + \int_0^r \eta (s) b (s) \dif s
\end{split}
\end{equation}
where the function \(b : (0, +\infty) \to \R\) is defined for \(s \in (0, +\infty)\) by
\[
  b (s) = 2 \int_{s}^r \int_{t}^r 
  \frac{s \abs{u (s)}^2 \, t \abs{u (t)}^2}{q \abs{u (q)}^2} \ln \frac{t}{s} \dif q \dif t.
\]
The formula \eqref{eqIntegralEqeta} is in fact the integral form of a first-order homogeneous linear differential equation.
Such an equation has an explicit solution, given by 
\begin{equation}
\begin{split}
 \eta (r)
 &=\eta (0) \exp \Bigl(\int_0^r b (s) \dif s\Bigr)\\
 &= \eta (0) \exp 
 \Bigl(2 \int_0^r \int_{s}^r \int_{t}^r 
  \frac{s \abs{u (s)}^2 \,t \abs{u (t)}^2}{q \abs{u (q)}^2} 
  \ln \frac{t}{s}  \dif q \dif t \dif s \Bigr).
\end{split}
\end{equation}
We observe now that if \(r \ge 1\),
\begin{multline*}
 \int_0^r \int_{s}^r \int_{t}^r 
  \frac{s \abs{u (s)}^2 \,t \abs{u (t)}^2}{q \abs{u (q)}^2} 
  \ln \frac{t}{s}  \dif q \dif t \dif s\\
  \ge \int_0^1 \int_{s}^1 s \abs{u (s)}^2 \,t \abs{u (t)}^2 \ln \frac{t}{s} \dif t \dif s 
  \int_{1}^r \frac{1}{q \abs{u (q)}^2} \dif q\\
  \ge c \int_1^r \frac{1}{q \abs{u (q)}^2} \dif q,
\end{multline*}
from which the conclusion follows.
\end{proof}     

To go on, in view of \eqref{eqUradial}, we compute
\[
 \Hat{\mathcal{L}}_0 u (r) = 2 u (r) \int_0^r \abs{u (s)}^2\, s \ln \frac{r}{s} \dif s,
\]
for each \(r \in (0, +\infty)\). If we now set \(z (r) = r u' (r)\), we get 
\begin{multline*}
 \Hat{\mathcal{L}}_0 z (r) 
 = - r u''' (r)- 3 u'' (r) - \frac{1}{r} u' (r)\\
 + \bigl(a - w (r)\bigr) r u' (r) + 2 u(r) \int_0^r u (s) u' (s)\, s^2 \ln \frac{r}{s} \dif s.
\end{multline*}
On the other hand, by differentiating the equation \eqref{eqUradial} satisfied by \(u\), we have, for each \(r \in (0, +\infty)\)
\begin{equation}
\label{eqUradialdifferentiated}
  - u''' (r) - \frac{1}{r} u''(r) + \frac{1}{r^2} u' (r)
  + \bigl(a - w (r)\bigr) u' (r) - w' (r) u (r) = 0,
\end{equation}
where 
\begin{equation}
\label{eqVradialdifferentiated}
 w' (r) = -\int_0^r \abs{u(s)}^2 \frac{s}{r} \dif s. 
\end{equation}
Combining \eqref{eqUradial} and \eqref{eqUradialdifferentiated}, we deduce that
\[
\begin{split}
  \Hat{\mathcal{L}}_0 z (r) &= u (r) \Bigl(-\int_0^r \abs{u (s)}^2\, s \dif s - 2 a + 2 \int_0^\infty \abs{u (s)}^2\, s \ln \frac{1}{\max (r, s)} \dif s\\
  &\qquad\qquad
  - 2 \int_0^r \abs{u (s)}^2\, s \ln \frac{r}{s} \dif s + \int_0^r \abs{u (s)}^2\, s \dif s\Bigr)\\
  &= u (r) \Bigl(- 2 a + 2 \int_0^\infty \abs{u (s)}^2\, s \ln \frac{1}{s} \dif s
  - 4 \int_0^r \abs{u (s)}^2\, s \ln \frac{r}{s} \dif s\Bigr).
\end{split}
\]
If we now define the function \(\zeta : (0, +\infty) \to \R\) for each \(r \in (0, +\infty)\) by 
\[
 \zeta (r) = 2 u (r) + z (r) = 2 u (r) + r u' (r),
\]
we obtain
\[
 \Hat{\mathcal{L}}_0 \zeta (r) = - 2 \Bigl( a + \int_0^\infty \abs{u (s)}^2\, s \ln s \dif s\Bigr)u (r) .
\]
We claim that 
\[
  a + \int_0^\infty \abs{u (s)}^2\, s \ln s \dif s \ne 0.
\]
Otherwise, since by integration by parts, we have for each \(r \in (0, +\infty)\),
\[
  \int_0^r \zeta (s) \dif s = \int_0^r 2 u (s) + s u' (s) \dif s
  = r u (r) + \int_0^r u (s) \dif s,
\]
by the decay properties of \(u\) and by Lemma~\ref{decay}, this would only be possible if \(\zeta= 0\) on \((0, +\infty)\). This in turn would imply that for each \(r \in (0, +\infty)\), \(u (r) = u (1)/r^2\), which is impossible since the function \(u\) also has to satisfy the equation \eqref{eqUradial}.

We are now in position to prove that there are no radial eigenfunctions.

\begin{lemma}
\label{lemmakerL^2_0}
We have 
$$\ker \mathcal{L} \cap L^2_0 (\R^2; \C)= \ker \mathcal{L}_0 =\{0\}.$$
\end{lemma}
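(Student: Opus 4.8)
The plan is to suppose that $\psi \in \ker \mathcal{L}_0$, i.e. $\psi \in C^2([0,+\infty);\C)$ (indeed, by elliptic regularity any element of $\tilde X \cap L^2_0$ in the kernel is smooth), $\psi \in L^2$ with the appropriate radial weight, and
\[
 \mathcal{L}_0 \psi (r) = \Hat{\mathcal{L}}_0 \psi (r) + 2 u (r) \int_0^\infty u (s) \psi (s)\, s \ln s \dif s = 0 .
\]
Setting $c = 2 \int_0^\infty u(s)\psi(s)\, s \ln s\dif s \in \C$ (a finite constant by the exponential decay of $u$ and the weighted $L^2$ bound on $\psi$), this reads $\Hat{\mathcal{L}}_0 \psi = - c\, u$. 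First I would dispose of the purely real and imaginary parts separately, so without loss of generality $\psi$ and $c$ are real; then I compare $\psi$ with the two explicit functions on which $\Hat{\mathcal{L}}_0$ has been computed above: $\Hat{\mathcal{L}}_0 u (r) = 2 u (r) \int_0^r \abs{u(s)}^2 s \ln(r/s)\dif s$ and $\Hat{\mathcal{L}}_0 \zeta = -2\bigl(a + \int_0^\infty \abs{u(s)}^2 s \ln s \dif s\bigr) u =: -2\kappa\, u$, where the key fact established just before the lemma is $\kappa \neq 0$.

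The idea is to produce, from $\psi$, a solution of the homogeneous equation $\Hat{\mathcal{L}}_0 v = 0$ and then invoke Lemma~\ref{decay} to force it to vanish. Since $\Hat{\mathcal{L}}_0 \zeta = -2\kappa u$ with $\kappa \neq 0$, the function
\[
 v := \psi - \frac{c}{2\kappa}\,\zeta
\]
satisfies $\Hat{\mathcal{L}}_0 v = -c u + \frac{c}{2\kappa}\cdot 2\kappa u = 0$. Moreover $v \in C^2([0,+\infty);\R)$. Now Lemma~\ref{decay} applies: either $v \equiv 0$, or $\abs{v(r)} \to \infty$ as $r \to \infty$, with the lower bound $\abs{v(r)} \ge C_1 u(r)\exp\bigl(C_2 \int_1^r \frac{1}{s\abs{u(s)}^2}\dif s\bigr)$. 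I must rule out the second alternative. Here I use the growth of $\zeta(r) = 2u(r) + r u'(r)$ and the decay of $\psi$: $\psi \in L^2(\R^2)$ radial forces $\psi$ to decay (in fact, since $\psi$ solves a linear Schrödinger equation with potential $a - w(r) \to +\infty$, one gets $\psi(r) = o\bigl(\text{[exponential of the same type as for }u\text{]}\bigr)$ from an Agmon-type or comparison argument, or more cheaply just $\psi(r)\abs{x}^{1/2} \to 0$ in an $L^2$-averaged sense); and $\zeta$ has at worst polynomial growth (it is built from $u, u'$ which decay super-exponentially — so actually $\zeta$ decays super-exponentially too). Hence $v = \psi - \frac{c}{2\kappa}\zeta$ decays, which is incompatible with $\abs{v(r)}\to\infty$. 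Therefore $v \equiv 0$, i.e.
\[
 \psi = \frac{c}{2\kappa}\,\zeta = \frac{c}{2\kappa}\bigl(2u + r u'\bigr).
\]

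It remains to show $c = 0$, which then gives $\psi \equiv 0$. Recall $c = 2\int_0^\infty u(s)\psi(s)\, s\ln s\dif s$; substituting $\psi = \frac{c}{2\kappa}\zeta$ yields $c = \frac{c}{\kappa}\int_0^\infty u(s)\zeta(s)\, s\ln s\dif s$, so it suffices to check that $\int_0^\infty u\,\zeta\, s\ln s\dif s \neq \kappa$, or alternatively to argue directly that $\zeta \notin \tilde X$ — indeed $\zeta = 2u + ru'$ restricted to be in the domain $\tilde X$ would require, via the integral-by-parts identity $\int_0^r \zeta(s)\dif s = r u(r) + \int_0^r u(s)\dif s$ already recorded above, a constraint that $\zeta$ cannot meet unless $c=0$; more robustly, since $\zeta$ is the generator of the dilation action $\frac{d}{d\lambda}\big|_{\lambda=1}\lambda^2 u(\lambda\cdot)$ and the functional $I$ is \emph{not} dilation invariant (the quadratic part is not coercive, as emphasized in the introduction), $\zeta$ cannot lie in $\ker\mathcal{L}$, so $\psi = \frac{c}{2\kappa}\zeta \in \ker\mathcal{L}_0$ forces $c = 0$. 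Either way $\psi \equiv 0$, proving $\ker\mathcal{L}_0 = \{0\}$ and hence, by the angular splitting $\ker\mathcal{L} = \bigoplus_k (\ker\mathcal{L}\cap L^2_k)$, that $\ker\mathcal{L}\cap L^2_0(\R^2;\C) = \{0\}$.

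The main obstacle is the last step — showing $c = 0$, equivalently that $\zeta = 2u + ru'$ is genuinely outside the relevant function space or fails the compatibility condition; this is where the failure of scaling invariance of $I$ must be used in an essential and careful way, and where one must be precise about the definition of $\tilde X$ and about the membership $u\,(\ln\ast(u\zeta)) \in L^2$. Everything else is comparison-principle bookkeeping via Lemma~\ref{decay} together with the already-proven nonvanishing of $\kappa = a + \int_0^\infty \abs{u}^2 s\ln s\dif s$.
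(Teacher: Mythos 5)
Your first two steps follow the paper's argument exactly: writing $\Hat{\mathcal{L}}_0\psi=-c\,u$ with $c=2\int_0^\infty u\psi\, s\ln s\dif s$, subtracting the multiple $\frac{c}{2\kappa}\zeta$ of $\zeta=2u+ru'$ (using $\Hat{\mathcal{L}}_0\zeta=-2\kappa u$ with $\kappa=a+\int_0^\infty\abs{u}^2 s\ln s\dif s\neq 0$), and invoking Lemma~\ref{decay} together with the decay of $\psi$ and $\zeta$ to conclude $\psi=\frac{c}{2\kappa}\zeta$. But the final step, $c=0$, is exactly where you stop and acknowledge "the main obstacle", and the two alternatives you sketch do not work. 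Arguing that $\zeta\notin\tilde X$ fails: $\zeta=2u+ru'$ is smooth and decays super-exponentially (by Theorem~\ref{theoremAsymptotics} and elliptic estimates), so it does lie in $\tilde X$; the identity $\int_0^r\zeta=ru(r)+\int_0^r u$ is used in the paper only to establish $\kappa\neq0$, not to exclude $\zeta$ from the domain. The "dilation non-invariance" heuristic is not a proof either: since $\mathcal{L}_0\zeta=2u\bigl(\int_0^\infty u\zeta\,s\ln s\dif s-\kappa\bigr)$, the assertion $\zeta\notin\ker\mathcal{L}_0$ is literally equivalent to the inequality $\int_0^\infty u\zeta\,s\ln s\dif s\neq\kappa$ that you leave unverified, so this route is circular.

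The paper closes the gap by a short explicit computation, which is the one concrete ingredient missing from your proposal: integrating by parts,
\[
\int_0^\infty u(s)\zeta(s)\, s\ln s\dif s=2\int_0^\infty\abs{u(s)}^2 s\ln s\dif s+\int_0^\infty u(s)u'(s)\,s^2\ln s\dif s=\int_0^\infty\abs{u(s)}^2 s\ln s\dif s-\frac12\int_0^\infty\abs{u(s)}^2 s\dif s,
\]
so the compatibility relation $\int_0^\infty u\zeta\,s\ln s\dif s=\kappa=a+\int_0^\infty\abs{u}^2 s\ln s\dif s$ (which must hold if $c\neq0$) would force $a=-\frac12\int_0^\infty\abs{u}^2 s\dif s<0$, contradicting $a>0$. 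This is the essential use of the sign of $a$; with it, $c=0$, hence $\psi\equiv0$ and $\ker\mathcal{L}_0=\{0\}$. Add this computation and your argument coincides with the paper's.
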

\begin{proof}
Let us assume that \(\varphi \in L^2_0 (\R^2; \C)\setminus\{ 0\}\) and
\[
 \mathcal{L}_0 \varphi = 0.
\]
We then deduce that 
\[
  \Hat{\mathcal{L}}_0 \varphi  = - 2 \Bigl(\int_0^\infty  u (s) \varphi (s)\, s \ln s\dif s\Bigr) u
\]
We define 
\[
  \psi  = \varphi  - \zeta  \frac{\displaystyle \int_0^\infty u (s) \varphi (s)\, s \ln s \dif s }{\displaystyle \int_0^\infty \abs{u (s)}^2\, s \ln s \dif s + a}.
\]
By construction, \(\psi \in L^2_0 (\R^2; \C)\).
By Lemma~\ref{decay}, this implies \(\psi (r) = 0\) for each \(r \in (0, +\infty)\), that is, for every \(r \in (0, +\infty)\),
\[
 \varphi (r) =  \zeta (r)  \frac{\displaystyle \int_0^{\vphantom{1}\infty} u (s) \varphi (s)\, s \ln s \dif s }{\displaystyle \int_0^{\vphantom{1}\infty} \abs{u (s)}^2\, s \ln s \dif s + a}.
\]
Integrating, we infer that 
\[
 \int_0^\infty u (s) \varphi (s)\, s \ln s \dif s = \int_0^{\vphantom{1}\infty}u (s) \zeta (s)\, s \ln s \dif s\frac{\displaystyle \int_0^{\vphantom{1}\infty} u (s) \varphi (s)\, s \ln s \dif s }{\displaystyle \int_0^\infty \abs{u (s)}^2\, s \ln s \dif s + a}
\]
and thus, by the definition of \(\zeta\)
\[
\begin{split}
 \int_0^\infty \abs{u (s)}^2\, s \ln s \dif s + a  &= \int_0^\infty 2 \abs{u (s)}^2\, s \ln s \dif s + \int_0^\infty u (s) u'(s)\, s^2 \ln s \dif s\\
 &=\int_0^\infty \abs{u (s)}^2\, s \ln s \dif s - \frac{1}{2} \int_0^\infty \abs{u (s)}^2\, s \dif s,
\end{split}
\]
which is impossible since \(a > 0\).
\end{proof}

\subsection{Nonradial eigenfunctions}

For every \(k \in \Z \setminus\{0\}\), we have the variational definition of the eigenvalues
\[
 \lambda_0 \bigl(\mathcal{L}_k\bigr)
 = \inf \Bigl\{\mathcal{Q}_k (\psi) \st \psi e^{i k \theta} \in X \text{ and }\int_{0}^\infty \abs{\psi (r)}^2 \, r \dif r = 1 \Bigr\},
\]
where 
\begin{multline*}
 \mathcal{Q}_k (\psi)
 =\int_0^\infty \bigl(\abs{\psi' (r)}^2 + (1 - w (r)) \abs{\psi (r)}^2\bigr) \, r \dif r
 + k^2 \int_0^\infty \frac{\abs{\psi (r)}^2}{r}\dif r\\
 - \operatorname{Re} \int_0^\infty \int_0^\infty  \psi (r) \overline{\psi (s)} K_k (r, s) \dif s \dif r,
\end{multline*}
with
\[
 K_k (r, s) = \frac{1}{\abs{k}} u (r) u(s) \Bigl(\frac{\min (r, s)}{\max (r, s)} \Bigr)^k 
  r s,
\]
is defined for \(\psi \in W^{1, 1}_{\mathrm{loc}} ((0, +\infty); \C)\) such that
\[
 \int_0^\infty \bigl(\abs{\psi' (r)}^2 + \abs{\psi (r)}^2\bigr) \, r + \frac{\abs{\psi (r)}^2}{r} + \abs{\psi (r)}^2 r \ln_+ (r) \dif r < \infty .
\]
Thanks to the logarithmic weight in the definition of the functional space \(X\) and the logarithmic growth of \(w\) at infinity, the eigenvalue \(\lambda_0 (\mathcal{L}_k)\) is achieved.
We now observe that 
\[
 \mathcal{Q}_k (\abs{\psi})
 =\mathcal{Q}_k (\psi) - \int_0^\infty \int_0^\infty \bigl( \abs{\psi (r)}\abs{\psi (s)} - \operatorname{Re} (\psi (r) \overline{\psi (s)})\bigr)\, K_{k}(r, s) \dif s,
\]
Since \(K_k (r, s) > 0\) on \((0, +\infty)\times(0, +\infty)\), it follows that 
\[
 \mathcal{Q}_k (\abs{\psi}) \ge \mathcal{Q}_k (\psi),
\]
with equality if and only if \(\operatorname{Re} (\psi (r) \overline{\psi (s)}) = \abs{\psi (s)}\abs{\psi (r)}\) for almost every \((r, s) \in (0, +\infty)\times(0, +\infty)\), which implies that there exists \(\alpha \in \C\) such that \(\abs{\alpha} = 1\) and for each \(r \in (0, +\infty)\),
\[
 \psi (r) = \alpha \abs{\psi (r)}.
\]
In particular, the space of eigenvectors corresponding to \(\lambda_0 (\mathcal{L}_k)\)
is spanned by nonnegative eigenvectors. If \(\psi_1\) and \(\psi_2\) are two nonnegative eigenvectors, then the quantity \(D : (0, +\infty)^2 \to \R\)
\[
  D (r, s) = \psi_1 (r) \psi_2 (s) - \psi_2 (r) \psi_1 (s)
\]
has constant sign, since \(D (\cdot, s)\) and \(D (r, \cdot)\) are real eigenvectors corresponding to the eigenvalue \(\lambda_0 (\mathcal{L}_k)\). Since \(D (r, s) = - D (s, r)\), it follows that \(D (r, s ) = 0\). As a consequence, we deduce that \(\lambda_0 (\mathcal{L}_k)\) is a simple eigenvalue and we can assume that the associated eigenfunction is a nonnegative function \(\psi_k : (0, +\infty) \to \R\).

Note that if \(\abs{\ell} \le \abs{k}\), then \(K_k \le K_\ell\) pointwize. 
Therefore, we have, since \(\psi_k \ge 0\), 
\[
\begin{split}
 \mathcal{Q}_k (\psi_k)
 &=\mathcal{Q}_\ell (\psi_k)
 + \bigl(k^2  - \ell^2 \bigr)\int_0^\infty \frac{\abs{\psi_k(r)}^2}{r}\dif r\\
 & \qquad+ \int_0^\infty \psi_k (r) \psi_k (s)\bigl(K_\ell (r, s)- K_k (r, s)\bigr)\dif s \dif r\\
 &> \mathcal{Q}_\ell (\psi_k),
\end{split}
\]
from which it follows that 
\(
 \lambda_0 (\mathcal{L}_k) > \lambda_0 (\mathcal{L}_{\ell}). 
\)
We can now conclude our proof of Theorem \ref{theoremNondegenerate} by showing that \(\mathcal{\lambda}_0 (\mathcal{L}_1) = 0\).

\begin{proof}[Proof of Theorem \ref{theoremNondegenerate}]
We observe that 
\begin{multline*}
 \mathcal{L}_1 (u') (r)=
 -u''' (r) - \frac{u'' (r)}{r} + \frac{u' (r)}{r^2} + \bigl(a - w (r)\bigr) u' (r)\\
 - u (r) \int_0^\infty  u (s)  u(s) \frac{\min (r, s)}{\max (r, s)} 
  s \dif s.
\end{multline*}
By \eqref{eqUradialdifferentiated} and by integration by parts, we deduce that
\[
 \mathcal{L}_1 (u') (r)=  - u (r) \Bigl(\int_0^r \abs{u(s)}^2 \frac{s}{r} \dif s +  \int_0^\infty  u' (s)  u(s) \frac{\min (r, s)}{\max (r, s)} s \dif s\Bigr)
 = 0.
\]
This implies that \(u'\) is an eigenfunction of the operator \(\mathcal{L}_1\). Assume by contradiction that \(\lambda_0 (\mathcal{L}_1) < 0\). Then by orthogonality of eigenfunctions, we have
\[
 \int_0^\infty \psi_1 (r) u' (r) \, r \dif r = 0,
\]
which cannot hold since the function \(u\) is decreasing and the function \(\psi_1\) is nonnegative. Therefore we have shown \(\mathcal{\lambda}_0 (\mathcal{L}_1) = 0\).
Since we already proved that the eigenvalue \(\lambda_0 (\mathcal{L}_1)\) is simple, it follows that \(\ker \mathcal{L}_{1} = \operatorname{span} \langle u'\rangle\). By the discussion preceding the proof, we also know that \(\ker \mathcal{L}_{k}=\{0\}\) whenever \(\abs{k} > 1\) and Lemma \ref{lemmakerL^2_0} implies \(\ker \mathcal{L}_{0}=\{0\}\).
We have thus proved that 
\[
\ker \mathcal{L} = \bigoplus_{k \in \Z} \bigl(\ker \mathcal{L}_{k}\bigr) =  \mathcal{L}_{1}=\{u'(r)e^{i \theta} : \theta \in[0,2\pi)\}.\qedhere
\] 
\end{proof}

\begin{remark}
Alternatively, we could have used a nonlocal groundstate representation formula \cite{MorozVanSchaftingen2012}*{Proposition 2.1}, to write
\[
 \mathcal{Q}_1 (\psi) = \int_0^\infty (\psi/u')' r \dif r
 + \frac{1}{2} \int_0^\infty K_1 (r, s) u' (r) u' (s) \Bigl(\frac{\psi (r)}{u' (r)} - \frac{\psi (s)}{u' (s)} \Bigr)^2 \dif r \dif s
\]
and obtain that the eigenvalue \(\mathcal{\lambda}_0 (\mathcal{L}_1)\) is equal to \(0\) and is simple. 
\end{remark}

\begin{bibdiv}

\begin{biblist}

\bib{agmon}{article}{
  author = {Agmon, Shmuel},
  title = {Lectures on exponential decay of solutions of second-order
	  elliptic equations: bounds on eigenfunctions of {$N$}-body
	  {S}chr\"odinger operators},
  SERIES = {Mathematical Notes},
  volume = {29},
  PUBLISHER = {Princeton University Press},
  ADDRESS = {Princeton, NJ},
  year =  {1982},
  pages = {118},
}

\bib{BonnanodAveniaGhimentiSquassina201}{article}{
   author={Bonanno, Claudio},
   author={d'Avenia, Pietro},
   author={Ghimenti, Marco},
   author={Squassina, Marco},
   title={Soliton dynamics for the generalized Choquard equation},
   journal={J. Math. Anal. Appl.},
   volume={417},
   date={2014},
   number={1},
   pages={180--199},
   issn={0022-247X},
%    review={\MR{3191420}},
%    doi={10.1016/j.jmaa.2014.02.063},
}

\bib{choquard.stubbe}{article}{
  AUTHOR = {Choquard, Philippe}, 
  author={Stubbe, J.},
  TITLE = {The one dimensional Schr\"odinger--Newton equations},
  JOURNAL = {Lett. Math. Phys.},
  VOLUME = {81},
  YEAR = {2007},
  NUMBER = {},
  PAGES = {177--184},
}
\bib{choquard.stubbe.vuffray}{article}{
   author={Choquard, Philippe},
   author={Stubbe, Joachim},
   author={Vuffray, Marc},
   title={Stationary solutions of the Schr\"odinger--Newton model---an ODE
   approach},
   journal={Differential Integral Equations},
   volume={21},
   date={2008},
   number={7-8},
   pages={665--679},
   issn={0893-4983},
%    review={\MR{2479686 (2009j:35347)}},
}

\bib{CingolaniClappSecchi}{article}{
	author={Cingolani, Silvia},
	author={Clapp, Monica},
	author={Secchi, Simone},
	title={Multiple solutions to a magnetic nonlinear Choquard equation},
	journal={Z. Angew. Math. Phys},
	volume={63},
	date={2012},
	number={},
	pages={233--248},
}

\bib{CingolaniSecchiSquassina2010}{article}{
	author={Cingolani, Silvia},
	author={Secchi, Simone},
	author={Squassina, Marco},
	title={Semi-classical limit for Schr\"odinger equations with magnetic
		field and Hartree-type nonlinearities},
	journal={Proc. Roy. Soc. Edinburgh Sect. A},
	volume={140},
	date={2010},
	number={5},
	pages={973--1009},
	issn={0308-2105},
}

\bib{CW}{article}{
  author = {Cingolani, Silvia}, 
  author={Weth, T.},
  title = {On the planar {S}chr\"odinger--Poisson
    systems},
  journal = {Ann. Inst. H. Poincar\'e Anal. Non Lin\'eaire},
  year =  {2016},
  volume={33},
  number={1},
  pages={169--197},
%   doi = {10.1016/j.anihpc.2014.09.008},
}

\bib{DavSqua}{article}{
author = {D'Avenia, Pietro},
author = {Squassina, Marco},
title = {Soliton dynamics for the {S}chr\"odinger-Newton system},
journal = {Mathematical Models and Methods in Applied Sciences},
volume = {24},
number = {03},
pages = {553-572},
year = {2014},
%doi = {10.1142/S0218202513500590},
}

\bib{Diosi1984}{article}{
   title={Gravitation and quantum-mechanical localization of macro-objects},
   author={Di\'osi, L.},
   journal={Phys. Lett. A},
   volume={105},
   number={4--5},
   date={1984},
   pages={199--202},
}

\bib{GenevVenkov2012}{article}{
   author={Genev, Hristo},
   author={Venkov, George},
   title={Soliton and blow-up solutions to the time-dependent
   Schr\"odinger-Hartree equation},
   journal={Discrete Contin. Dyn. Syst. Ser. S},
   volume={5},
   date={2012},
   number={5},
   pages={903--923},
   issn={1937-1632},
%   review={\MR{2877355}},
%   doi={10.3934/dcdss.2012.5.903},
}

\bib{harrison}{article}{
	author = {Harrison, R.},
	author = {Moroz, T.},
	author = {Tod, K.P.},
	title = {A numerical study of the Schr\"odinger--Newton equation},
	journal = {Nonlinearity},
	volume = {16},
	year =  {2003},
	number = {},
	pages = {101--122},
}

\bib{Jones1995newtonian}{article}{
  title={Newtonian quantum gravity},
  author={Jones, K. R. W.},
  journal={Australian Journal of Physics},
  volume={48},
  number={6},
  pages={1055--1081},
  year={1995},
}

\bib{Lenzmann2009}{article}{
  author = {Lenzmann, Enno},
  title = {Uniqueness of ground states for pseudorelativistic Hartree equations},
  journal = {Analysis and PDE},
  volume = {2},
  year =  {2009},
  number = {1},
  pages = {1--27},
}

\bib{Lieb1977}{article}{
   author={Lieb, Elliott H.},
   title={Existence and uniqueness of the minimizing solution of Choquard's
   nonlinear equation},
   journal={Studies in Appl. Math.},
   volume={57},
   date={1976/77},
   number={2},
   pages={93--105},
}

% \bib{Lieb1983}{article}{
%    author={Lieb, Elliott H.},
%    title={Sharp constants in the Hardy--Littlewood--Sobolev and related
%    inequalities},
%    journal={Ann. of Math. (2)},
%    volume={118},
%    date={1983},
%    number={2},
%    pages={349--374},
%    issn={0003-486X},
% }

\bib{LiebLoss2001}{book}{
   author={Lieb, Elliott H.},
   author={Loss, Michael},
   title={Analysis},
   series={Graduate Studies in Mathematics},
   volume={14},
   edition={2},
   publisher={American Mathematical Society},
   place={Providence, R.I.},
   date={2001},
   pages={xxii+346},
   isbn={0-8218-2783-9},
}

\bib{lions}{article}{
	AUTHOR = {Lions, Pierre-Louis},
	TITLE = {Solutions of Hartree-Fock equations for Coulomb systems},
	JOURNAL = {Comm. Math. Phys.},
% 	FJOURNAL = {Communications in Mathematical Physics},
	VOLUME = {109},
	YEAR = {1984},
	NUMBER = {},
	PAGES = {33--97},
}

\bib{mazhao}{article}{
	AUTHOR = {Ma, L.},
	AUTHOR = {Zhao, L.},
	TITLE = {Classification of positive solitary solutions of the nonlinear Choquard equation},
	JOURNAL = {Arch. Rational Mech. Anal.},
% 	FJOURNAL = {Archive of Rational Mechanics and Analysis},
	VOLUME = {195},
	YEAR = {2010},
	NUMBER = {},
	PAGES = {455--467},
}

\bib{masaki}{article}{
	AUTHOR = {Masaki, Satoshi},
	TITLE = {Energy solution to a {S}chr\"odinger--Poisson system in the
		two-dimensional whole space},
	JOURNAL = {SIAM J. Math. Anal.},
% 	FJOURNAL = {SIAM Journal on Mathematical Analysis},
	VOLUME = {43},
	YEAR = {2011},
	NUMBER = {6},
	PAGES = {2719--2731},
}

\bib{Menzala1983}{article}{
   author={Menzala, Gustavo Perla},
   title={On the nonexistence of solutions for an elliptic problem in
   unbounded domains},
   journal={Funkcial. Ekvac.},
   volume={26},
   date={1983},
   number={3},
   pages={231--235},
   issn={0532-8721},
}

\bib{MorozVanSchaftingen2012}{article}{
   author={Moroz, Vitaly},
   author={Van Schaftingen, Jean},
   title={Nonlocal Hardy type inequalities with optimal constants and
   remainder terms},
   journal={Ann. Univ. Buchar. Math. Ser.},
   volume={3(LXI)},
   date={2012},
   number={2},
   pages={187--200},
   issn={2067-9009},
}

\bib{MorozVanSchaftingen2013}{article}{
   author={Moroz, Vitaly},
   author={Van Schaftingen, Jean},
   title={Groundstates of nonlinear Choquard equations: Existence,
   qualitative properties and decay asymptotics},
   journal={J. Funct. Anal.},
   volume={265},
   date={2013},
   number={2},
   pages={153--184},
   issn={0022-1236},
%   doi={10.1016/j.jfa.2013.04.007},
}

\bib{Paris2010}{article}{
author={Paris, R. B.},
  title={Incomplete gamma and related functions},
  book={
    title={NIST Handbook of Mathematical Functions},
    editor={Olver, F. W. J.},
    editor={Lozier, D. W.},
    editor={Boisvert, R. F.},
    editor={Clark, C. W.},
    publisher={Cambridge University Press},
    address={New York},
    date={2010},
  },
  pages={173--192},
}

\bib{pekar}{book}{
	AUTHOR = {Pekar, S.I.},
	TITLE = {Untersuchungen \"uber die Elektronentheorie der Kristalle},
	PUBLISHER = {Akademie-Verlag, Berlin},
	YEAR = {1954},
	PAGES = {29--34},
}

\bib{penrose}{article}{
	author = {Penrose, Roger},
	title = {On gravity's role in quantum state reduction},
	JOURNAL = {Gen. Rel. Grav.},
% 	FJOURNAL = {General Relativity and Gravitation},
	VOLUME = {28},
	YEAR = {1996},
	NUMBER = {5},
	PAGES = {581--600},
}

\bib{SteinWeiss1971}{book}{
   author={Stein, Elias M.},
   author={Weiss, Guido},
   title={Introduction to Fourier analysis on Euclidean spaces},
   series = {Princeton Mathematical Series}, 
   volume={32},
   publisher={Princeton University Press},
   address= {Princeton, N.J.},
   date={1971},
   pages={x+297},
}

\bib{Stubbe}{article}{
  author = {Stubbe, Joachim},
  title = {Bound states of two-dimensional Schr\"odinger--Newton equations},
  eprint={0807.4059v1},
date={2008},}

\bib{Temme2010}{article}{
  author={Temme, N. M.},
  title={Error functions, Dawson's and Fresnel integrals},
  book={
    title={NIST Handbook of Mathematical Functions},
    editor={Olver, F. W. J.},
    editor={Lozier, D. W.},
    editor={Boisvert, R. F.},
    editor={Clark, C. W.},
    publisher={Cambridge University Press},
    address={New York},
    date={2010},
  },
  pages={160--171},
}

    \bib{WangYi}{article}{
   	author={Wang,Tao},
    	author={Yi,Taishan},
    	title={Uniqueness of positive solutions of the Choquard type equations},
    	journal={Appl. Anal.},
    	volume={},
    	date={2016},
    	number={},
    	pages={1--9},
    	 }

\bib{weiwinter}{article}{
	author = {Wei, Jungcheng},
	author = {Winter, Matthias},
	title = {Strongly interacting bumps for the Schr\"odinger--Newton equations},
	JOURNAL = {J. Math Phys.},
% 	FJOURNAL = {},
	VOLUME ={012905},
	YEAR = {2009},
	NUMBER = {1},
	PAGES = {22 pp.},
}

\end{biblist}

\end{bibdiv}

\end{document}